\documentclass[12pt]{amsart}
\usepackage[utf8]{inputenc}
\usepackage{fullpage}
\usepackage{graphicx}
\usepackage{amsmath, amsthm}
\usepackage{amsfonts}
\usepackage{amssymb}
\usepackage{url}

\usepackage[skins,breakable]{tcolorbox}

\usepackage[backrefs]{amsrefs}

\newtheorem{theorem}{Theorem}
\newtheorem{lemma}{Lemma}

\newtheorem{question}{Question}
\newtheorem{example}{Example}
\newtheorem{defn}{Definition}

\usepackage{hyperref}
\usepackage{tikz}
\usetikzlibrary{positioning}

\def\diam{\text{diam}}
\keywords{Zero forcing, multicolor forcing}
\title{Multi-color forcing in graphs}
\author{Chassidy Bozeman}
\author{Pamela E. Harris}\thanks{P. E. Harris was supported in part by the National Science Foundation grant DMS-1620202.}
\author{Neel Jain}
\author{Ben Young}
\author{Teresa Yu}
\address[C. Bozeman]{Department of Mathematics and Statistics, Mount Holyoke College, South Hadley, MA 01075, USA}
\email{\textcolor{blue}{\href{mailto:cbozeman@mtholyoke.edu}{cbozeman@mtholyoke.edu}}}
\address[P. E. Harris, N. Jain, B. Young, and T. Yu]{Department of Mathematics and Statistics, Williams College,
Williamstown, MA 01267, USA} 
\email{\textcolor{blue}{\href{mailto:peh2@williams.edu}{peh2@williams.edu}}, \textcolor{blue}{\href{mailto:nsj2@williams.edu}{nsj2@williams.edu}}, \textcolor{blue}{\href{mailto:bly1@williams.edu}{bly1@williams.edu}}, and \textcolor{blue}{\href{mailto:twy1@williams.edu}{twy1@williams.edu}}}

\begin{document}
\date{}

\begin{abstract}
Let $G=(V,E)$ be a finite connected graph along with a coloring of the vertices of $G$ using the colors in a given set $X$.
In this paper, we introduce multi-color forcing, a generalization of zero-forcing on graphs, and give conditions in which the multi-color forcing process terminates regardless of the number of colors used. We give an upper bound on the number of steps required to terminate a forcing procedure in terms of the number of vertices in the graph on which the procedure is being applied. We then focus on multi-color forcing with three colors and analyze the end states of certain families of graphs, including complete graphs, complete bipartite graphs, and paths, based on various initial colorings. We end with a few directions for future research.
\end{abstract}

\maketitle
\section{Introduction}

Let $G=\left(V,E\right)$ be a connected graph with all vertices colored blue or white.
If $v$ is a blue vertex of $G$, then $v$ {\em forces} a white vertex $u$ to blue if and only if $u$ is the only white vertex in the neighborhood of $v$. This procedure is called the {\em color change rule} \cite{AIMMINIMUMRANKSPECIALGRAPHSWORKGROUP20081628}. Then the {\em zero forcing} process is the procedure of applying the color change rule until no more changes are possible. If $S$ is an initial set of vertices that is colored blue such that the entire graph can be colored blue by applying the color change rule, then $S$ is a {\em zero forcing} set. A main goal in the study of zero forcing  on graphs is to determine the the minimum size of a zero forcing set, known as the {\em zero forcing  number} of a graph.

The zero forcing procedure was introduced in linear algebra as a tool for studying the maximum nullity over a family of matrices \cite{AIMMINIMUMRANKSPECIALGRAPHSWORKGROUP20081628}, and independently in physics, computer science, and network science \cite{PhysRevLett.99.100501,DREYER20091615,BARIOLI2010401}. Since its introduction it has motivated the research of many mathematicians \cite{JGT:JGT21637,edholm,HUANG20102961}.
Kalinowski, Kam\v{c}ev, and Sukakov considered  bipartite, random, and pseudorandom graphs and established bounds for the zero forcing  number of these graphs \cite{Kalinowski}. Chilakamarri, Dean, Kang, and Yi defined the \textit{iteration index} of a graph to be the number of implementations of the color change rule, such that all vertices of a graph are blue. In their work, Chilakamarri et. al. determined that the minimum of iteration indices of all minimum zero forcing   sets of $G$ was a graph invariant, which they called the \emph{iteration index} of $G$ \cite{Chilakamarri}.
Hogben, Huynh, Kingsley, Meyer, Walker, and Young characterized graphs with extreme minimum propagation times (iteration index) and showed that the diameter is an upper bound for the propagation time for a tree, but that in general the diameter of a graph can be arbitrarily larger than its minimum propagation time \cite{Hogben}.

Our research is motivated by the following open problem posed by Daniela Ferrero and listed on the American Institute of Mathematics  website for the recent conference on zero forcing  and its applications~\cite{AIMOpen}.  
\begin{center}
\begin{tcolorbox}
\begin{minipage}{\textwidth}
{\bf{Problem 1.58}.}
 \emph{What is the generalization to zero forcing   with multiple colors? Maybe the colors are linearly ordered, i.e., red can force purple, purple can force blue, etc. Maybe the graph has an underlying coloring and filled vertices force according to the zero forcing   rule if they are on their preferred color and don't otherwise. This was an application of an ecosystem where colors represent animals (or bacteria etc.) and the underlying coloring corresponds to an animals suitable habitat."}
\end{minipage}
\end{tcolorbox}
\end{center}

In this paper, we extend the graph theoretical idea of forcing on a graph with two colors to forcing on a graph with multiple colors. In Section \ref{section:terminology}, we introduce the terminology used throughout the paper. Termination of multi-forcing processes is discussed in Section \ref{section:termination}, and we give a technique for constructing the end state of a graph $G$ from the end state of a smaller graph $G'$. In Section \ref{section:cyclic}, we focus specifically on multi-color forcing with three colors. Lastly, in Section \ref{sec:future} we end with a few directions for future research.

\section{Multi-color forcing terminology}\label{section:terminology}
Throughout this paper, $G$ is a finite simple graph, and if $n\in\mathbb{N}:=\{1,2,3,\ldots\}$, we let $[n]=\{1,2,\ldots,n\}$. For a set of ``colors'' $X\subseteq[n]$, we say an {\em $X-$colored graph} is a graph whose vertices are colored using colors from $X$, and a {\em forcing network} is a pair $(X,R)$ where $R$ is an ordered set of color change rules corresponding to the colors in $X$.

The simplest way to visually understand a forcing network is by a directed graph. For example, let $X=\{1,2,3\}$ and $R=\{1\rightarrow 2, 2\rightarrow 3, 3\rightarrow 1\}.$ We can represent this network visually by the directed graph in Figure \ref{fig: Cyclic Forcing Network w/ Three Colors}. Although it is not evident from the visual representation of a forcing network, we apply color change rules in the order in which they appear in $R$. 

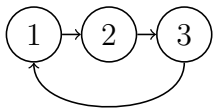
\begin{figure}[ht!]
    \centering
    \begin{tikzpicture}[-latex ,auto ,node distance =2 cm and 1cm ,on grid ,
    semithick ,
    state/.style ={ circle ,top color =white,
    draw,  minimum width =.5cm}]
    \node[state] (A)
    {$1$};
    \node[state] (B) [right=of A] {$2$};
    \draw [->] (A) edge (B);
    \node[state] (C) [right =of B] {$3$};
    \draw[->] (B) edge (C);
    \draw [->] (C) to [out=270,in=270] (A);
    \end{tikzpicture}\\
    \caption{Cyclic forcing color network with three colors.}
    \label{fig: Cyclic Forcing Network w/ Three Colors}
\end{figure}

In our generalization of zero forcing, we do not allow different color change rules to occur simultaneously, and we do allow vertices to force their neighbors, subject to the rules in $R$, even if they have multiple neighbors of a given color. It should be noted that the condition that a blue vertex can force a neighboring white vertex if and only if it has exactly one neighboring white vertex is necessary in order to make the connection of zero forcing to linear algebra as in \cite{AIMMINIMUMRANKSPECIALGRAPHSWORKGROUP20081628}. Although relaxing this condition in multi-color forcing loses the linear algebra connection, it gives rise to interesting graph theoretical problems in its own right.  

A \emph{forcing step}, denoted FS, occurs in a graph $G$ when a color change rule from $R$ is applied to $G$. A \emph{propagating forcing step}, denoted PFS,  occurs when a given color change rule is applied  until no more forces are possible using this rule. This means that a forced vertex can then force other vertices under the given rules, all within a fixed propagating forcing step. This is called {\em forcing with propagation} and is illustrated in the following.

\begin{example}\label{forcing with prop}{\rm 
Consider the forcing network $(X,R)$ where $X=\{1,2,3\}$ and $R=\{1\to 2, 2\to 3, 3\to 1\}$, as in Figure \ref{fig: Cyclic Forcing Network w/ Three Colors}. We apply this network to the $X-$colored graph $G$ shown in Figure~\ref{no termination}. The forcing process terminates after 4 forcing steps, but after only 2 propagating forcing steps.

\begin{figure}[ht!]     
\centering
\resizebox{\textwidth}{!}{
    \begin{tikzpicture}[-latex,auto ,node distance =2 cm and 1cm ,on grid ,
    semithick ,
    state/.style ={ circle ,top color =white,
    draw,  minimum width =.5cm}]
    \node[state] at (-1,-1/2) (A){$1$};
    \node[state] at (0,-1) (B) {$3$};
    \node[state] at (1,-1/2) (C) {$3$};
    \node[state] at (1,1/2) (D) {$2$};
    \node[state] at (-1,1/2) (F) {$1$};
    \node[state, label = {$G$}] at (0,1) (E) {$2$};
    \draw [-] (A) -- (B); 
    \draw [-] (B) -- (C);
    \draw [-] (C) -- (D);
    \draw [-] (D) -- (E);
    \draw [-] (E) -- (F);
    \draw [-] (A) -- (F);
    \end{tikzpicture}
\qquad
    \begin{tikzpicture}[-latex ,auto ,node distance =2 cm and 1cm ,on grid ,
    semithick ,
    state/.style ={ circle ,top color =white,
    draw,  minimum width =.5cm}]
    \node[state] at (-1,-1/2) (A){$1$};
    \node[state] at (0,-1) (B) {$3$};
    \node[state] at (1,-1/2) (C) {$3$};
    \node[state] at (1,1/2) (D) {$2$};
    \node[state] at (-1,1/2) (F) {$1$};
    \node[state, label = {$G$ after FS 1}] at (0,1) (E) {$1$};
    \draw [-] (A) -- (B); 
    \draw [-] (B) -- (C);
    \draw [-] (C) -- (D);
    \draw [-] (D) -- (E);
    \draw [-] (E) -- (F);
    \draw [-] (A) -- (F);
    \end{tikzpicture}
\qquad
    \begin{tikzpicture}[-latex ,auto ,node distance =2 cm and 1cm ,on grid ,
    semithick ,
    state/.style ={ circle ,top color =white,
    draw,  minimum width =.5cm}]
    \node[state] at (-1,-1/2) (A){$1$};
    \node[state] at (0,-1) (B) {$3$};
    \node[state] at (1,-1/2) (C) {$3$};
    \node[state] at (1,1/2) (D) {$1$};
    \node[state] at (-1,1/2) (F) {$1$};
    \node[state, label = {$G$ after FS2  (PFS1)}] at (0,1) (E) {$1$};
    \draw [-] (A) -- (B); 
    \draw [-] (B) -- (C);
    \draw [-] (C) -- (D);
    \draw [-] (D) -- (E);
    \draw [-] (E) -- (F);
    \draw [-] (A) -- (F);
    \end{tikzpicture}
\qquad
     \begin{tikzpicture}[-latex ,auto ,node distance =2 cm and 1cm ,on grid ,
    semithick ,
    state/.style ={ circle ,top color =white,
    draw,  minimum width =.5cm}]
    \node[state] at (-1,-1/2) (A){$3$};
    \node[state] at (0,-1) (B) {$3$};
    \node[state] at (1,-1/2) (C) {$3$};
    \node[state] at (1,1/2) (D) {$3$};
    \node[state] at (-1,1/2) (F) {$1$};
    \node[state, label = {$G$ after FS3}] at (0,1) (E) {$1$};
    \draw [-] (A) -- (B); 
    \draw [-] (B) -- (C);
    \draw [-] (C) -- (D);
    \draw [-] (D) -- (E);
    \draw [-] (E) -- (F);
    \draw [-] (A) -- (F);
    \end{tikzpicture}
\qquad
    \begin{tikzpicture}[-latex ,auto ,node distance =2 cm and 1cm ,on grid ,
    semithick ,
    state/.style ={ circle ,top color =white,
    draw,  minimum width =.5cm}]
    \node[state] at (-1,-1/2) (A){$3$};
    \node[state] at (0,-1) (B) {$3$};
    \node[state] at (1,-1/2) (C) {$3$};
    \node[state] at (1,1/2) (D) {$3$};
    \node[state] at (-1,1/2) (F) {$3$};
    \node[state, label = {$G$ after FS 4  (PFS2)}] at (0,1) (E) {$3$};
    \draw [-] (A) -- (B); 
    \draw [-] (B) -- (C);
    \draw [-] (C) -- (D);
    \draw [-] (D) -- (E);
    \draw [-] (E) -- (F);
    \draw [-] (A) -- (F);
    \end{tikzpicture}
    }
    \caption{Example of applying at network to an $X-$colored graph.}\label{no termination} 
\end{figure}
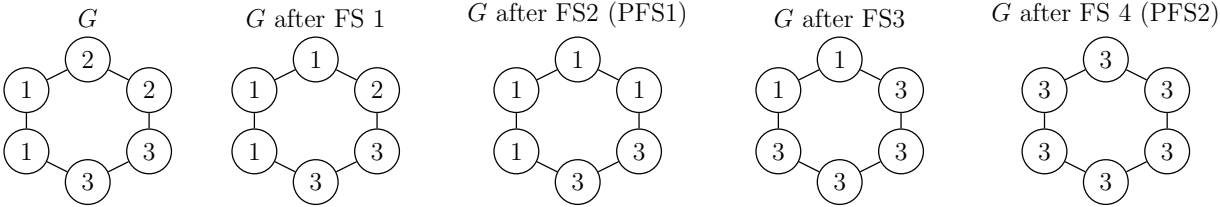
}

\end{example}

 For an $X-$colored graph $G$ and $i\geq 0$, we use $\ell_{i}(G)$ to denote the state of the coloring on $G$ after the $i^{\text{th}}$ propagating forcing step, while $\epsilon(\ell_0(G))$ denotes the end state of $G$ arising from the initial coloring $\ell_{0}(G)$ . We use $\epsilon(\ell_0(G))=\vec{c}$ to denote that the end state of $G$ has all vertices colored $c$. 
\section{Termination of the multi-color forcing procedure}\label{section:termination}

A natural question to ask is whether or not a given forcing network terminates. In this section, we show that a forcing network applied to a graph on $n$ vertices terminates in at most $n-1$ steps when forcing with propagation occurs. We also give a condition that will guarantee a forcing network will still terminate even when forcing with propagation is not required. In general, a forcing network might not terminate if forcing with propagation is not required. The following example demonstrates this.

\begin{example}\label{ex: does not terminate}{\rm 
Consider the forcing network $(X,R)$ where $X=\{1,2,3\}$ and $R=\{1\to 2, 2\to 3, 3\to 1\}$, as in Example \ref{forcing with prop}. When applying this network to $G$ without propagation, after three steps the graph coloring is equivalent to its original state, which demonstrates that the process will not terminate. We illustrate this in Figure \ref{fig:no termination}.
\begin{figure}[ht!]
    \centering
   \begin{tikzpicture}[-latex ,auto ,node distance =2 cm and 1cm ,on grid ,
    semithick ,
    state/.style ={ circle ,top color =white,
    draw,  minimum width =.5cm}]
    \node[state] at (-1,-1/2) (A){$1$};
    \node[state] at (0,-1) (B) {$3$};
    \node[state] at (1,-1/2) (C) {$3$};
    \node[state] at (1,1/2) (D) {$2$};
    \node[state] at (-1,1/2) (F) {$1$};
    \node[state, label = {$G$}] at (0,1) (E) {$2$};
    \draw [-] (A) -- (B); 
    \draw [-] (B) -- (C);
    \draw [-] (C) -- (D);
    \draw [-] (D) -- (E);
    \draw [-] (E) -- (F);
    \draw [-] (A) -- (F);
    \end{tikzpicture}
    \qquad
    \begin{tikzpicture}[-latex ,auto ,node distance =2 cm and 1cm ,on grid ,
    semithick ,
    state/.style ={ circle ,top color =white,
    draw,  minimum width =.5cm}]
    \node[state] at (-1,-1/2) (A){$1$};
    \node[state] at (0,-1) (B) {$3$};
    \node[state] at (1,-1/2) (C) {$3$};
    \node[state] at (1,1/2) (D) {$2$};
    \node[state] at (-1,1/2) (F) {$1$};
    \node[state, label = {$G$ after FS 1}] at (0,1) (E) {$1$};
    \draw [-] (A) -- (B); 
    \draw [-] (B) -- (C);
    \draw [-] (C) -- (D);
    \draw [-] (D) -- (E);
    \draw [-] (E) -- (F);
    \draw [-] (A) -- (F);
    \end{tikzpicture} 
    \qquad
    \begin{tikzpicture}[-latex ,auto ,node distance =2 cm and 1cm ,on grid ,
    semithick ,
    state/.style ={ circle ,top color =white,
    draw,  minimum width =.5cm}]
    \node[state] at (-1,-1/2) (A){$1$};
    \node[state] at (0,-1) (B) {$3$};
    \node[state] at (1,-1/2) (C) {$2$};
    \node[state] at (1,1/2) (D) {$2$};
    \node[state] at (-1,1/2) (F) {$1$};
    \node[state, label = {$G$ after FS2}] at (0,1) (E) {$1$};
    \draw [-] (A) -- (B); 
    \draw [-] (B) -- (C);
    \draw [-] (C) -- (D);
    \draw [-] (D) -- (E);
    \draw [-] (E) -- (F);
    \draw [-] (A) -- (F);
    \end{tikzpicture} 
    \qquad
    \begin{tikzpicture}[-latex ,auto ,node distance =2 cm and 1cm ,on grid ,
    semithick ,
    state/.style ={ circle ,top color =white,
    draw,  minimum width =.5cm}]
    \node[state] at (-1,-1/2) (A){$3$};
    \node[state] at (0,-1) (B) {$3$};
    \node[state] at (1,-1/2) (C) {$2$};
    \node[state] at (1,1/2) (D) {$2$};
    \node[state] at (-1,1/2) (F) {$1$};
    \node[state, label = {$G$ after FS3}] at (0,1) (E) {$1$};
    \draw [-] (A) -- (B); 
    \draw [-] (B) -- (C);
    \draw [-] (C) -- (D);
    \draw [-] (D) -- (E);
    \draw [-] (E) -- (F);
    \draw [-] (A) -- (F);
    \end{tikzpicture}
    \caption{Graph $G$ with non-propagation forcing steps that does not terminate.}\label{fig:no termination}
\end{figure}
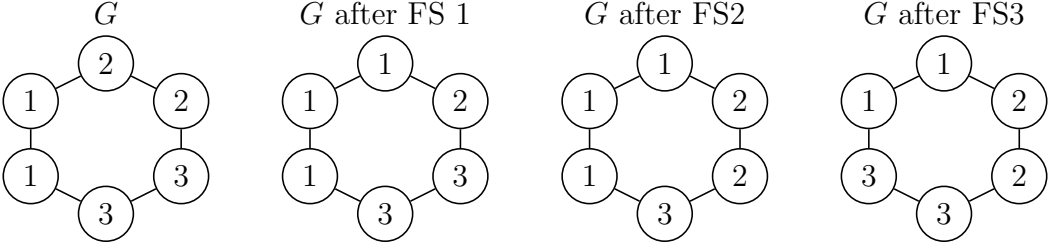

However, when applying the rules  $1\rightarrow 2$, $2\rightarrow 3$, and $3 \rightarrow 1$ in this order and with propagation, the process terminates with all vertices colored $3$, as demonstrated in Example~\ref{forcing with prop}.  
}
\end{example}
It should be noted that when a forcing network terminates, it need not terminate with all vertices colored the same color as in the previous example. To see this, consider the network $(X,R),$ where $X=\{1,2,3\}$ and $R=\{1\to 2, 2\to 3\}$ and let $G$ be the path on 4 vertices with the initial coloring as in Figure \ref{fig:path example}. Then after one forcing step, the network will terminate with the coloring 1113. 
\begin{figure}[h]\centering
    \begin{tikzpicture}[-latex ,auto ,node distance =2 cm and 1cm ,on grid ,
    semithick ,
    state/.style ={ circle ,top color =white,
    draw,  minimum width =.5cm}]
    \node[state] at (-1,-1/2) (A){$1$};
    \node[state] at (0,-1/2) (B) {$2$};
    \node[state] at (1,-1/2) (C) {$1$};
        \node[state] at (2,-1/2) (D) {$3$};
    \node at (.5,1/3) {$G$};
    \draw [-] (A) -- (B)--(C)--(D); 
    \end{tikzpicture}
    \qquad
        \begin{tikzpicture}[-latex ,auto ,node distance =2 cm and 1cm ,on grid ,
    semithick ,
    state/.style ={ circle ,top color =white,
    draw,  minimum width =.5cm}]
        \node[state] at (-1,-1/2) (A){$1$};
    \node[state] at (0,-1/2) (B) {$1$};
    \node[state] at (1,-1/2) (C) {$1$};
            \node[state] at (2,-1/2) (D) {$3$};
    \draw [-] (A) -- (B)--(C)--(D); 
    \node at (.5,1/3) {$G$ after FS1};
    \end{tikzpicture}
    \caption{Graph $G$ and its termination after one forcing step under $R=\{1\to 2, 2\to 3\}$.}
    \label{fig:path example}
\end{figure}
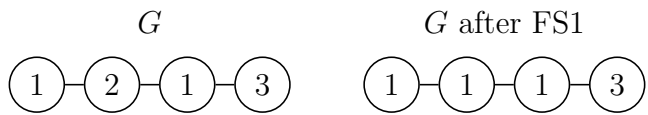

It is known that classical zero forcing on a graph with $n$ vertices terminates after at most $n-1$ color changes \cite{Hogben}.  We now show that all forcing networks terminate whenever forcing with propagation is required.

\begin{theorem} \label{thm: terminates}
Given a forcing network $(X,R)$ and $X-$colored graph, $G$, with $n$ nodes, the network applied to $G$ terminates in at most $n-1$ propagating forcing steps. 
\end{theorem}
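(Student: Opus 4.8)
The plan is to exhibit a monovariant on colorings that strictly decreases with every propagating forcing step (PFS) that actually recolors a vertex, and that is bounded between $1$ and $n$. For a coloring $\ell$ of $G$, let $C(\ell)$ denote the number of \emph{monochromatic components}, that is, the number of maximal connected subsets of $V$ all of whose vertices share a single color. Since every such component is nonempty and every vertex lies in exactly one of them, we always have $1 \le C(\ell) \le n$. A PFS that changes no vertex should not be counted (compare Example~\ref{forcing with prop}, where the rule $2\to 3$ is inert), so it suffices to show that each \emph{effective} PFS strictly decreases $C$; this forces the number of effective PFS to be at most $n-1$, after which no rule in $R$ can recolor any vertex and the end state is reached.

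The core step is the following claim: applying a single rule $c_1 \to c_2$ with propagation never increases $C(\ell)$, and strictly decreases it whenever at least one vertex is recolored. To see this, I would first record what such a PFS does: a vertex colored $c_2$ is recolored to $c_1$ precisely when it can be reached from some $c_1$-colored vertex along a path of $c_2$-colored vertices, so that a whole monochromatic $c_2$-component is converted to $c_1$ exactly when it is adjacent to a $c_1$-colored vertex, while all other components retain both their color and their vertex sets. I would then track each monochromatic component through the step. Components colored with some $c_3 \notin \{c_1,c_2\}$, and $c_2$-components having no $c_1$-neighbor, are untouched, so they neither merge nor split. The converted $c_2$-components and the pre-existing $c_1$-components are exactly the pieces that change: each converted $c_2$-component was adjacent to, hence now fuses with, at least one $c_1$-component, and recoloring vertices to a color already carried by a neighbor can only merge components, never create or split one. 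Hence $C$ cannot increase.

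For the strict decrease, I would use that an effective PFS produces, just before it fires, a $c_2$-colored vertex $u$ adjacent to a $c_1$-colored vertex $v$. The monochromatic component of $u$ (colored $c_2$) and that of $v$ (colored $c_1$) are distinct, since they carry different colors; after the step the entire $c_2$-component of $u$ has become $c_1$ and, via the edge $uv$, lies in a common $c_1$-component with $v$. Thus two formerly distinct components are identified, and by the previous paragraph nothing splits off to compensate, so $C$ drops by at least one.

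Combining these, $C(\ell_0) \le n$ decreases by at least one at each effective PFS while remaining at least $1$, so there can be at most $C(\ell_0)-1 \le n-1$ effective PFS. Once no rule in $R$ recolors a vertex we are at the end state, giving termination in at most $n-1$ propagating forcing steps. I expect the main obstacle to be the bookkeeping in the core claim, specifically verifying rigorously that the only components affected are the converted $c_2$-components together with the $c_1$-components they attach to, and that their fusion is captured exactly by the connected components of the auxiliary ``adjacency-between-pieces'' graph; once that is pinned down, both the non-increase and the strict decrease follow cleanly.
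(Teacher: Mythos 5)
Your proof is correct and follows essentially the same route as the paper's: both arguments track the number of maximal monochromatic connected components (at most $n$ initially, at least $1$ always) and show it strictly decreases with each propagating forcing step that recolors anything, giving the bound of $n-1$ steps. Your version simply spells out more carefully the bookkeeping the paper leaves implicit, namely that propagation converts whole $c_2$-components at once so components can only merge and never split.
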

\begin{proof}
Let $H_1,\ldots,H_k$ be a partition of $G$ into maximal connected subgraphs whose vertices are all colored with the same color. Note that there exists at most $n$ such subgraphs, so $k\leq n.$ Let $u$ and $v$ be vertices such that $u$ forces $v$ in some propagating forcing step  $\ell.$ If $u \in H_i$ and $v\in H_j$, then after step $\ell$ completes, $V(H_i)$ and $V(H_j)$ will be the same color. This show that the number of maximal connected subgraphs of $G$ whose vertices are all colored with the same color decreases after each propagating forcing step occurs. Therefore, there can be at most $n-1$ propagating forcing steps. 
\end{proof} 

The authors of \cite{Hogben} show that for a tree, $T$, the zero forcing process will terminate in at most $\diam(T)$ forcing steps, where $\diam(G)$ denotes the diameter of a graph $G$ (the maximum distance between any two vertices in $G$). In Example \ref{ex: diam counter}, we give a tree $T$ and a forcing network that terminates in more than $\diam(T)$ propagating forcing steps. However, we prove in Theorem \ref{thm: diam and prop steps}, that for any forcing network and any tree $T$, each propagating forcing step completes in at most $\diam(T)$ forcing steps. 

\begin{example}{\rm \label{ex: diam counter} Let $(X,R)$ be the network with $X=\{1,2,3,4\}$ and  $R=\{1\to 2, 1\to 3, 1\to 4\}$ and let $G$ be the $X-$colored graph shown in Figure \ref{fig: more than diam(G) propagating forcing steps}. Then $\diam(G)=2$ and $(X,R)$ terminates in 3 propagating forcing steps.
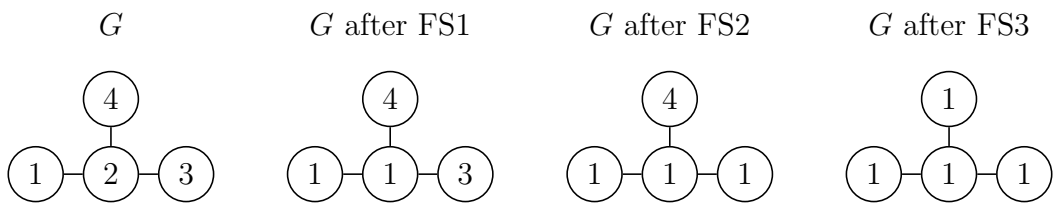
\begin{figure}[ht!]
\centering
\begin{tikzpicture}[-latex ,auto ,node distance =2 cm and 1cm ,on grid ,
semithick ,
state/.style ={ circle ,top color =white,
draw,  minimum width =.5cm}]
\node[state] at (-1,-1) (A){$1$};
\node[state] at (0,-1) (B) {$2$};
\node[state] at (1,-1) (C) {$3$};
\node[state] at (0,0) (D) {$4$};
\node at (0,1) {$G$};
\draw [-] (A) -- (B); 
\draw [-] (B) -- (C);
\draw [-] (B) -- (D);
\end{tikzpicture}
\qquad
\begin{tikzpicture}[-latex ,auto ,node distance =2 cm and 1cm ,on grid ,
semithick ,
state/.style ={ circle ,top color =white,
draw,  minimum width =.5cm}]
\node[state] at (-1,-1) (A){$1$};
\node[state] at (0,-1) (B) {$1$};
\node[state] at (1,-1) (C) {$3$};
\node[state] at (0,0) (D) {$4$};
\node at (0,1) {$G$ after FS1};
\draw [-] (A) -- (B); 
\draw [-] (B) -- (C);
\draw [-] (B) -- (D);
\end{tikzpicture}
\qquad
\begin{tikzpicture}[-latex ,auto ,node distance =2 cm and 1cm ,on grid ,
semithick ,
state/.style ={ circle ,top color =white,
draw,  minimum width =.5cm}]
\node[state] at (-1,-1) (A){$1$};
\node[state] at (0,-1) (B) {$1$};
\node[state] at (1,-1) (C) {$1$};
\node[state] at (0,0) (D) {$4$};
\node at (0,1) {$G$ after FS2};
\draw [-] (A) -- (B); 
\draw [-] (B) -- (C);
\draw [-] (B) -- (D);
\end{tikzpicture}
\qquad
\begin{tikzpicture}[-latex ,auto ,node distance =2 cm and 1cm ,on grid ,
semithick ,
state/.style ={ circle ,top color =white,
draw,  minimum width =.5cm}]
\node[state] at (-1,-1) (A){$1$};
\node[state] at (0,-1) (B) {$1$};
\node[state] at (1,-1) (C) {$1$};
\node[state] at (0,0) (D) {$1$};
\node at (0,1) {$G$ after FS3};
\draw [-] (A) -- (B); 
\draw [-] (B) -- (C);
\draw [-] (B) -- (D);
\end{tikzpicture}
 \caption{An $X-$colored graph, $G$, that terminates in more than $\diam(G)$ propagating forcing~steps.}
 \label{fig: more than diam(G) propagating forcing steps}
\end{figure}
}
\end{example}

\begin{theorem}\label{thm: diam and prop steps} Let $(X,R)$ be a forcing network and let $T$ be an $X-$colored tree. Then each propagating forcing step completes in at most $\diam(T)$ forcing steps. 
\end{theorem}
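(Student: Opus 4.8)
The plan is to analyze a single propagating forcing step in isolation and to recognize it as a flood-fill process on the tree. Fix the color change rule $a\to b$ that is applied during the PFS in question, and let $S_0$ be the set of vertices colored $a$ at the moment the PFS begins. The crucial structural observation is that throughout the \emph{entire} PFS only vertices originally colored $b$ ever change color, and each such vertex can only change to $a$; every vertex of any color other than $a,b$ is inert, and once a vertex becomes $a$ it stays $a$. Hence the PFS is exactly the spreading of the region $S_0$ into the surrounding $b$-colored region, one forcing step at a time, where a single forcing step recolors simultaneously all $b$-vertices that currently have a neighbor colored $a$.

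First I would attach to each recolored vertex $v$ its \emph{absorption time} $t_v$, namely the index of the forcing step at which $v$ turns from $b$ to $a$ (and set $t_u=0$ for $u\in S_0$). Because forcing step $t$ recolors exactly the vertices with $t_v=t$, and because no further force is possible once no $a$-vertex has a $b$-neighbor, the PFS completes after precisely $\max_v t_v$ forcing steps. It therefore suffices to show $t_v\le\diam(T)$ for every recolored vertex $v$.

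The key step is a back-tracing lemma, proved by strong induction on $t_v$: if $v$ is absorbed at step $t\ge 1$, then there is a simple path $v=w_0,w_1,\ldots,w_t=u$ in $T$ with $u\in S_0$ and with $w_i$ absorbed at step $t-i$, so the absorption times strictly decrease to $0$ along the path. For the inductive step, note that since $v$ is absorbed at step $t$ and not earlier, $v$ had no neighbor colored $a$ at the start of step $t-1$; consequently the neighbor $w$ that absorbs $v$ at step $t$ must itself have become $a$ during step $t-1$, i.e. $t_w=t-1$. Prepending $v$ to the length-$(t-1)$ path guaranteed for $w$ by the inductive hypothesis produces the desired path for $v$, and this extended walk is automatically simple because the absorption times along it are strictly decreasing, so no vertex can repeat (which is consistent with $T$ being acyclic). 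Finally, because $T$ is a tree this simple path is the unique $v$--$u$ path, so its length satisfies $t_v=d_T(v,u)\le\diam(T)$; taking the maximum over all recolored vertices bounds the number of forcing steps in the PFS by $\diam(T)$.

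The main obstacle is not in the induction, which is routine, but in pinning down the correct notion of a single forcing step: applying a rule ``once'' must mean carrying out all currently-available forces \emph{simultaneously}, so that a vertex and the neighbor responsible for absorbing it cannot both be recolored within the same step. This is exactly what makes the absorption times strictly increase along the advancing frontier and hence what turns $t_v$ into a genuine path length in $T$; on a graph with cycles the same argument would instead only bound $t_v$ by a path length, not by $\diam$, which is precisely why Example~\ref{ex: diam counter} and the trees hypothesis are essential.
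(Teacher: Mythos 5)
Your proof is correct and takes essentially the same route as the paper's: the paper likewise back-traces the chain of forces, observing that a vertex forcing at step $m$ must itself have been forced at step $m-1$, which produces a path $(v_0,\ldots,v_m)$ of strictly decreasing forcing times in $T$ and hence $m\le\diam(T)$. Your absorption times, explicit strong induction, and remark on the simultaneity of forces within a forcing step are simply a more formal rendering of that same argument.
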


\begin{proof}
Consider a single propagating forcing step, and suppose that it completes in exactly $k$ forcing steps. Let $v_m$ be a vertex that is forced in step $m$ of this propagating forcing step, and let $v_{m-1}$ be a vertex that forces $v_m$ in step $m$. Since $v_{m-1}$ does not force $v_m$ in a prior step, it must be the case that $v_{m-1}$ is forced in step $m-1$. In this way, we obtain vertices $v_0,\ldots,v_m$ such that for $0\leq i\leq m-1$, $v_i$ forces $v_{i+1}$ in step $i+1.$ Then $(v_0,\ldots,v_m)$ is a path in $T$, and since $T$ is a tree, this shows that the diameter of $T$ is at least $m$. 
\end{proof}

In general, when $G$ is not a tree, a single propagating forcing step may take more than $\diam(G)$ forces to complete, as shown next in Example \ref{ex:prop more than diam}.

\begin{example}\label{ex:prop more than diam}{\rm 
Let $X=\{1,2,3\}, R=\{1\to 2\}$, and $G$ be the graph show in Figure \ref{fig: more than diam(G) step in propagating forcing step}. Then the diameter of $G$ is 2, and the first (and only) propagating forcing step completes in 4 forcing steps.
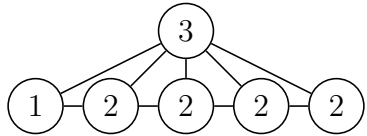
\begin{figure}[ht!]
\centering
\begin{tikzpicture}[-latex ,auto ,node distance =2 cm and 1cm ,on grid ,
semithick ,
state/.style ={ circle ,top color =white,
draw,  minimum width =.5cm}]
\node[state] at (0,0) (A){$3$};
\node[state] at (-2,-1) (B) {$1$};
\node[state] at (-1,-1) (C) {$2$};
\node[state] at (0,-1) (D) {$2$};
\node[state] at (1,-1) (E) {$2$};
\node[state] at (2,-1) (F) {$2$};
\draw [-] (A) -- (B); 
\draw [-] (A) -- (C); 
\draw [-] (A) -- (D); 
\draw [-] (A) -- (E); 
\draw [-] (A) -- (F); 
\draw [-] (B) -- (C);
\draw [-] (C) -- (D);
\draw [-] (D) -- (E); 
\draw [-] (E) -- (F); 
\end{tikzpicture}
 \caption{An $X-$colored graph, $G$, with a propagating step that takes more than $\diam(G)$ forcing steps to complete.}
 \label{fig: more than diam(G) step in propagating forcing step}
\end{figure}
}
\end{example}

\subsection{Termination without propagation}

Example \ref{ex: does not terminate} demonstrates that a forcing network need not terminate when forcing with propagation is not required. We now give a class of forcing networks that terminate even when the propagation condition is not required. 

\begin{theorem}\label{DAG}
Let $(X,R)$ be a forcing network that is acyclic. Then when applied to an $X-$colored graph, $(X,R)$ terminates. 
\end{theorem}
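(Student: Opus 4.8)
The plan is to exploit the acyclicity of $(X,R)$ to build a monovariant on colorings that strictly decreases with every genuine color change and is bounded below; this forces the process to stabilize after finitely many forcing steps, and hence to terminate.

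First I would pin down the effect of a single rule. A rule $a \to b \in R$ recolors a vertex colored $b$ (one that has a neighbor colored $a$) to color $a$; in the directed graph representing $(X,R)$ this is exactly the edge $(a,b)$, as in the worked examples, where applying $1\to 2$ turns a color-$2$ vertex into a color-$1$ vertex. Since $(X,R)$ is acyclic, this digraph on vertex set $X$ is a DAG, so it admits a topological ordering: a bijection $\sigma\colon X \to \{1,\ldots,|X|\}$ with $\sigma(a) < \sigma(b)$ whenever $a \to b \in R$. The crucial observation is that each application of $a \to b$ moves a vertex's color from $b$ to $a$ and therefore to a strictly earlier position in the topological order, since $\sigma(a) < \sigma(b)$.

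Next I would define the potential $\Phi(\ell) = \sum_{v \in V(G)} \sigma(\ell(v))$ for a coloring $\ell$ of $G$, where $\ell(v)$ denotes the color of $v$. A forcing step that applies the rule $a\to b$ recolors some set $S$ of vertices from $b$ to $a$; if $S=\emptyset$ the step is a no-op, and if $S\neq\emptyset$ then $\Phi$ drops by exactly $|S|\,(\sigma(b)-\sigma(a)) > 0$, while untouched vertices keep their contribution. Thus every forcing step that actually changes the coloring strictly decreases $\Phi$. Since $\Phi$ takes integer values and is bounded below by $|V(G)|$ (each summand is at least $1$), only finitely many color-changing forcing steps can occur. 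After the last such step no rule in $R$ can change any vertex, so the coloring is stable and the process terminates, independent of the order in which the rules are applied and without any propagation requirement.

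The main thing to get right is the orientation convention: I must be careful that $a \to b$ recolors the $b$-vertex to $a$ (not the reverse), so that the topological rank genuinely decreases along each force; getting this backwards would make $\Phi$ increase and break the argument. Once that is fixed the proof is essentially immediate, and the one extra point worth stating explicitly is that $\Phi$ bounds the total number of color changes regardless of scheduling — which is precisely what makes termination hold in the non-propagating regime, in contrast to Example~\ref{ex: does not terminate}, where the cyclic network $1\to 2\to 3\to 1$ admits no consistent ordering $\sigma$ and indeed cycles forever.
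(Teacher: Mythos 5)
Your proof is correct, but it takes a genuinely different route from the paper's. The paper proves Theorem~\ref{DAG} by contradiction: if the process never terminates, then since $G$ admits only finitely many $X$-colorings, some coloring must recur; tracking the color history of a single vertex that changes color between the two occurrences yields a nontrivial closed directed walk in the network (each recoloring from $b$ to $a$ witnesses the edge $a\to b$), hence a directed cycle, contradicting acyclicity. You instead argue directly with a monovariant: a topological ordering $\sigma$ of the acyclic network makes $\Phi(\ell)=\sum_{v\in V(G)}\sigma(\ell(v))$ an integer-valued potential, bounded below, that strictly decreases under every color-changing rule application --- and you correctly pinned down the orientation (a rule $a\to b$ recolors a $b$-vertex to color $a$, so its rank drops), which is indeed the one point where the argument could silently break. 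Both proofs ultimately rest on the same fact, since acyclicity is equivalent to the existence of a topological order (the paper even invokes topological orderings, somewhat awkwardly, in its final sentence). What your version buys is quantitative and scheduling-independent information: at most $|V(G)|\,(|X|-1)$ color-changing steps can ever occur, regardless of the order in which rules are applied, whereas the paper's pigeonhole argument yields termination with only an implicit bound of order $|X|^{|V(G)|}$. One small point worth making explicit in your write-up: after the last color-changing step, the rules continue to be applied cyclically and each application changes nothing, which is exactly what it means for the process to have terminated; this is immediate, and your proof stands.
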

\begin{proof}
Assume that the forcing process does not terminate. Since there are only finitely many ways to color the vertices of $G$ with colors from $X$, then there exist $i,j\geq 0$ with $i+1<j $, such that the coloring of $G$ after forcing step $j$ is exactly the coloring of $G$ after forcing step $i$. Choose any $v\in V(G)$ such that $v$ changes colors in forcing step $i+1$, and let $c_k(v)$ denote the color of $v$ after forcing step $k,$ for $i\leq k\leq j-1.$ Then it follows that $c_{k+1}(v)\to c_k(v)\in R,$ and since $c_i(v)=c_j(v),$ $(X,R)$ contains a cycle. This provides a contradiction since all networks that have topological orderings are acyclic.
\end{proof}
Theorem \ref{DAG} proves that all acyclic forcing networks terminate even without propagation. However, it is still possible for a forcing network that contains a cycle to terminate without propagation. We illustrate this in the next example.

\begin{example}\label{example}{\rm Consider the cyclic forcing
 network $X=\{1,2,3\}, R=\{1\to 2, 2\to 3, 3\to 1\}$ applied to the graph in Figure \ref{fig: triangle}. After 3 forcing steps the forcing network terminates.
 \begin{figure}[h]
    \centering
    \begin{tikzpicture}[-latex ,auto ,node distance =1 cm and 1cm ,on grid ,
    semithick ,
    state/.style ={ circle ,top color =white,
    draw,  minimum width =.3cm}]
    \node[state] at (-1/2,-1/2) (A){$3$};
    \node[state] at (1/2,-1/2) (B) {$2$};
    \node[state] at (0,1/3) (C) {$1$};
    \node at (0,1) {$G$};
    \draw [-] (A) -- (B); 
    \draw [-] (B) -- (C);
    \draw [-] (C) -- (A);
    \end{tikzpicture}
    \qquad
        \begin{tikzpicture}[-latex ,auto ,node distance =1 cm and 1cm ,on grid ,
    semithick ,
    state/.style ={ circle ,top color =white,
    draw,  minimum width =.3cm}]
    \node[state] at (-1/2,-1/2) (A){$3$};
    \node[state] at (1/2,-1/2) (B) {$1$};
    \node[state] at (0,1/3) (C) {$1$};
    \node at (0,1) {$G$ after FS1};
    \draw [-] (A) -- (B); 
    \draw [-] (B) -- (C);
    \draw [-] (C) -- (A);
    \end{tikzpicture}
    \qquad
            \begin{tikzpicture}[-latex ,auto ,node distance =1 cm and 1cm ,on grid ,
    semithick ,
    state/.style ={ circle ,top color =white,
    draw,  minimum width =.3cm}]
    \node[state] at (-1/2,-1/2) (A){$3$};
    \node[state] at (1/2,-1/2) (B) {$1$};
    \node[state] at (0,1/3) (C) {$1$};
    \node at (0,1) {$G$ after FS2};
    \draw [-] (A) -- (B); 
    \draw [-] (B) -- (C);
    \draw [-] (C) -- (A);
    \end{tikzpicture}
    \qquad
            \begin{tikzpicture}[-latex ,auto ,node distance =1 cm and 1cm ,on grid ,
    semithick ,
    state/.style ={ circle ,top color =white,
    draw,  minimum width =.3cm}]
    \node[state] at (-1/2,-1/2) (A){$3$};
    \node[state] at (1/2,-1/2) (B) {$3$};
    \node[state] at (0,1/3) (C) {$3$};
    \node at (0,1) {$G$ after FS3};
    \draw [-] (A) -- (B); 
    \draw [-] (B) -- (C);
    \draw [-] (C) -- (A);
    \end{tikzpicture}
    \caption{Graph along with forcing steps for Example \ref{example}.}
    \label{fig: triangle}
\end{figure}
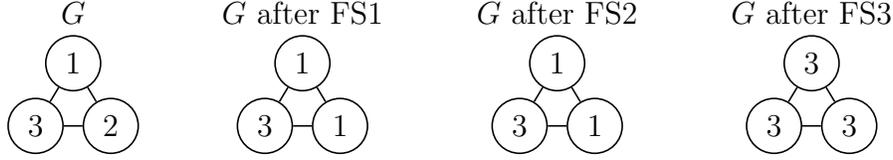
}
\end{example}\subsection{Color-contracted graph technique}\label{sec: Color-Contracted graph technique}
One advantage of forcing with propagation is that we
may reduce a graph $G$ to a smaller graph $G'$ and construct the end state of $G$ from the end state of $G'$. This idea is embedded in the proof of Theorem \ref{thm: terminates}, and we formalize it now.

Let $(X,R)$ be a forcing network and let $G$ be an $X-$colored graph. Partition $G$ into maximal connected subgraphs $H_1,\ldots,H_k$ such for each $i$, the vertices in $V(H_i)$ all share the same color. Note that this partition is unique. Let $G'$ be the graph constructed from $G$ by identifying each $H_i$ as single vertex $v_i$, and $v_i$ and $v_j$ are adjacent in $G'$ if and only if there exist vertices $u\in V(H_i), v\in V(H_j)$ such that $u$ and $v$ are adjacent in $G$. Then we define $G'$ to be the {\em color-contracted graph} of 
$G$. Equivalently, $G'$ is constructed from $G$ by contracting the edges of $G$ whose endpoints have the same color. An example of this is shown in Figure~\ref{fig:new complex graph and its contraction}.

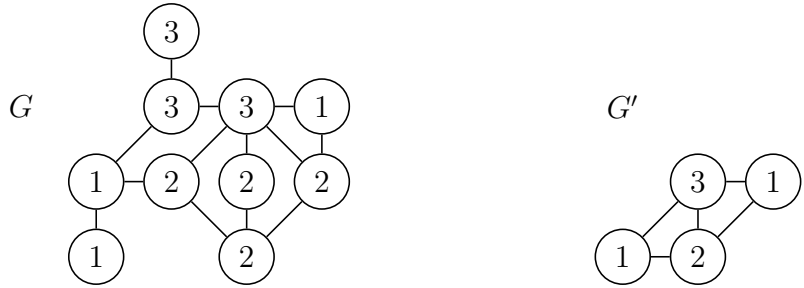
\begin{figure}[h]
\centering
\begin{tikzpicture}[-latex ,auto ,node distance =2 cm and 1cm ,on grid ,
semithick ,
state/.style ={ circle ,top color =white,
draw,  minimum width =.5cm}]
\node[state] at (0,0) (A) {$1$};
\node[state] at (2,0) (B) {$2$};
\node[state] at (0,1) (C) {$1$};
\node[state] at (1,1) (D) {$2$};
\node[state] at (2,1) (E) {$2$};
\node[state] at (3,1) (F) {$2$};
\node[state] at (1,2) (G) {$3$};
\node[state] at (2,2) (H) {$3$};
\node[state] at (3,2) (I) {$1$};
\node[state] at (1,3) (J) {$3$};
\node at (-1,2) {$G$};
\draw[-] (A)--(C);
\draw[-] (D)--(C);
\draw[-] (D)--(B);
\draw[-] (B)--(F);
\draw[-] (B)--(E);
\draw[-] (G)--(C);
\draw[-] (G)--(H);
\draw[-] (H)--(D);
\draw[-] (H)--(E);
\draw[-] (H)--(F);
\draw[-] (H)--(I);
\draw[-] (F)--(I);
\draw[-] (G)--(J);
\node[state] at (7,0) (A1) {$1$};
\node[state] at (8,0) (B1) {$2$};
\node[state] at (8,1) (C1) {$3$};
\node[state] at (9,1) (D1) {$1$};
\node at (7,2) {$G'$};
\draw[-] (A1)--(C1);
\draw[-] (A1)--(B1)--(C1)--(D1)--(B1);
\end{tikzpicture}
\caption{An $X-$colored graph $G$ and its color-contracted graph $G'$.}\label{fig:new complex graph and its contraction}
\end{figure}

\begin{theorem}\label{thm:color-contracted}
Let $(X,R)$ be a forcing network, let $G$ be an $X-$colored graph, and let $G'$ be the color-contracted graph of $G$ as defined above. If forcing with propagation is required, then $\epsilon(\ell_0(G))$ can be constructed from $\epsilon(\ell_0(G'))$ by coloring each vertex in $H_i$ with the color of $v_i$ in $\epsilon(\ell_0(G'))$.
\end{theorem}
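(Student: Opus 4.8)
The plan is to show that the entire propagating forcing process on $G$ is the ``blow-up'' of the process on $G'$, so that the two runs proceed in lockstep and their end states correspond under un-contraction. Write $\pi\colon V(G)\to V(G')$ for the contraction map sending each vertex of $H_i$ to $v_i$, and for a coloring $c'$ of $G'$ let $\pi^*c'$ denote the pullback coloring of $G$ defined by $(\pi^*c')(w)=c'(\pi(w))$. By construction of $G'$ the initial colorings satisfy $\ell_0(G)=\pi^*\ell_0(G')$. The goal is the invariant $\ell_t(G)=\pi^*\ell_t(G')$ for every $t\ge 0$, applying the same sequence of rules from $R$ to both graphs; taking $t$ large enough that both processes have stabilized (which happens by Theorem \ref{thm: terminates}) then yields $\epsilon(\ell_0(G))=\pi^*\epsilon(\ell_0(G'))$, which is exactly the claim.

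The heart of the argument is a single commutation lemma: if $c=\pi^*c'$ and we apply one rule $a\to b$ \emph{with propagation} to each graph, then the results again satisfy (result on $G$) $=\pi^*$(result on $G'$). To prove it I would first record the semantics of a propagating forcing step for the rule $a\to b$: restricting attention to the subgraph induced by the vertices currently colored $a$ or $b$, a $b$-colored vertex ends the step colored $a$ precisely when its connected component in that induced subgraph contains an $a$-colored vertex (vertices of other colors neither change nor conduct the spread). The key structural point is that since $c=\pi^*c'$ is constant on each $H_i$ and each $H_i$ is connected, every $H_i$ lies entirely inside one connected component of this induced subgraph; hence each $H_i$ flips all-or-nothing, exactly mirroring the single vertex $v_i$.

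To finish the lemma I would match the component structure across the contraction. Because $v_i$ and $v_j$ are adjacent in $G'$ if and only if some edge of $G$ joins $H_i$ to $H_j$, and because each $H_i$ is internally connected, the subgraph of $G$ induced on the vertices colored $a$ or $b$ is the vertex blow-up of the corresponding subgraph of $G'$: two blocks $H_i,H_j$ lie in the same component in $G$ if and only if $v_i,v_j$ lie in the same component in $G'$, and a component contains an $a$-colored block of $G$ iff the corresponding component contains an $a$-colored vertex of $G'$. Therefore $H_i$ is recolored to $a$ in $G$ exactly when $v_i$ is recolored to $a$ in $G'$, giving $F_{a\to b}(\pi^*c')=\pi^*(F_{a\to b}(c'))$, where $F_{a\to b}$ denotes the propagating step for $a\to b$. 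The invariant then follows by induction on $t$, and since the correspondence of colorings makes a rule able to force in $G$ exactly when it can force in $G'$, the two runs activate the same rules and terminate simultaneously.

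I expect the main obstacle to be precisely the place where propagation is essential: in a \emph{single} non-propagating forcing step only the boundary vertices of $H_i$ adjacent to color $a$ would be recolored, so $H_i$ would cease to be monochromatic and the pullback relation would break. The care needed is to verify that propagation reconverts all of $H_i$ within the same step---this is where connectivity of $H_i$ is used---and to confirm that vertices of colors other than $a$ and $b$ genuinely block the spread, so that the component analysis in $G$ and in $G'$ agree. Once the all-or-nothing behavior of each $H_i$ is established, the rest is the routine blow-up bookkeeping sketched above.
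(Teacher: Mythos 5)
Your proof is correct, and its skeleton---an induction on propagating forcing steps showing that the process on $G$ runs in lockstep with the process on $G'$, each block $H_i$ flipping all-or-nothing thanks to its connectivity and to propagation---is the same as the paper's. The difference lies in how the inductive step is verified. The paper argues only one direction: if $v_r$ forces $v_s$ in $G'$, then some vertex of $H_r$ forces into $H_s$ in $G$, and propagation recolors all of $H_s$; it leaves implicit the converse, namely that nothing happens in $G$ beyond what is mirrored in $G'$, and likewise that the two runs invoke the same rules at the same times. Your commutation lemma closes both gaps at once: by characterizing the outcome of a propagating step for a rule $a\to b$ exactly (a $b$-colored vertex is recolored precisely when its component in the subgraph induced on the $\{a,b\}$-colored vertices contains an $a$-colored vertex), and by matching components of $G$ and $G'$ under the contraction map $\pi$, you obtain the recoloring correspondence as an equivalence, from which $F_{a\to b}\circ\pi^{*}=\pi^{*}\circ F_{a\to b}$ and hence the invariant $\ell_t(G)=\pi^{*}\ell_t(G')$ follow. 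This costs you the setup of the component semantics but buys a genuinely two-sided, more rigorous argument; the paper's version is shorter but proves strictly less than what its induction hypothesis requires.
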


\begin{proof}
First note that {by definition}, for all $i$, the set $V(H_i)$ is colored with one color. Also for any $j\geq 0$, note that if one vertex in $V(H_i)$ is forced to a different color by another vertex {at the $j$-th forcing step}, then all vertices in $V(H_i)$ will be forced to become that color since propagation is required.

We now show that after each propagating forcing step, for each $i$, $v_i$ and all vertices in $V(H_i)$ will be the same color. The claim holds true for the initial colorings of $G$ and $G'$ (i.e. after step 0). For step $m\geq 1$, suppose that after step $m-1$, $v_i$ and all vertices in $V(H_i)$ are the same color for each $i$. Let $v_r,v_s\in V(G')$ be such that $v_r$ forces $v_s$ in step $m$, and let $c_r$ and $c_s$ denote the colors of $v_r$ and $v_s$ {after step $m-1$.}
Since $v_r$ and $v_s$ are adjacent in $G'$, by construction of $G'$, it follows that there exist an edge from $V(H_r)$ to $V(H_s)$ in $G$. Furthermore, since $V(H_r)$ is colored $c_r$ and $V(H_s)$ is colored $c_s$ after step $m-1$, then a vertex from $V(H_r)$ will force a vertex in $V(H_s)$ to become color $c_r$ during step $m$, and by propagation, it follows that each vertex in $V(H_s)$ will become color $c_r$ in step $m$. Thus, after step $m$, for each $i$, we have that $v_i$ {in $G'$} and all vertices $V(H_i)$ {in $G$} will be the same color. Now letting $m$ be the final step finishes the proof.
\end{proof}

We demonstrate Theorem \ref{thm:color-contracted} with an example.
\begin{example}{\rm
Consider again the forcing network with $X=\{1,2,3\}$ and $R=\{1\to 2, 2\to 3, 3\to 1\}.$ 
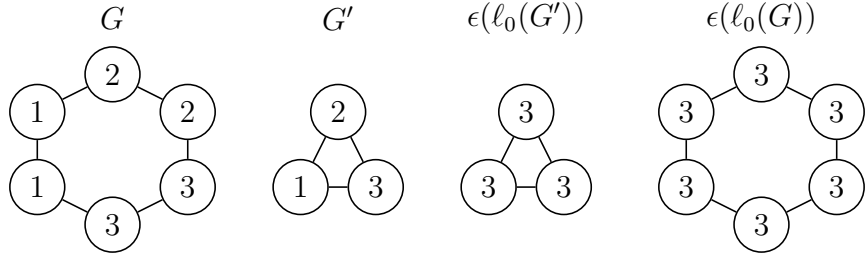
\begin{figure}[ht!]
\centering
    \begin{tikzpicture}[-latex ,auto ,node distance =2 cm and 1cm ,on grid ,
    semithick ,
    state/.style ={ circle ,top color =white,
    draw,  minimum width =.5cm}]
    \node[state] at (-1,-1/2) (A){$1$};
    \node[state] at (0,-1) (B) {$3$};
    \node[state] at (1,-1/2) (C) {$3$};
    \node[state] at (1,1/2) (D) {$2$};
    \node[state] at (-1,1/2) (F) {$1$};
    \node[state] at (0,1) (E) {$2$};
    \node at (0,1.75) {$G$};
    \draw [-] (A) -- (B); 
    \draw [-] (B) -- (C);
    \draw [-] (C) -- (D);
    \draw [-] (D) -- (E);
    \draw [-] (E) -- (F);
    \draw [-] (A) -- (F);
    \node[state] at (2.5,-1/2) (A){$1$};
    \node[state] at (3.5,-1/2) (B) {$3
    $};
    \node[state] at (3,1/2) (C) {$2$};
    \node at (3,1.75) {$G'$};
    \draw [-] (A) -- (B); 
    \draw [-] (B) -- (C);
    \draw [-] (C) -- (A);
    
    \node[state] at (5,-1/2) (A){$3$};
    \node[state] at (6,-1/2) (B) {$3$};
    \node[state] at (5.5,1/2) (C) {$3$};
    \node at (5.5,1.75) {$\epsilon(\ell_0(G'))$};
    \draw [-] (A) -- (B); 
    \draw [-] (B) -- (C);
    \draw [-] (C) -- (A);
    \end{tikzpicture}\qquad
     \begin{tikzpicture}[-latex ,auto ,node distance =2 cm and 1cm ,on grid ,
    semithick ,
    state/.style ={ circle ,top color =white,
    draw,  minimum width =.5cm}]
    \node[state] at (-1,-1/2) (A){$3$};
    \node[state] at (0,-1) (B) {$3$};
    \node[state] at (1,-1/2) (C) {$3$};
    \node[state] at (1,1/2) (D) {$3$};
    \node[state] at (-1,1/2) (F) {$3$};
    \node[state] at (0,1) (E) {$3$};
    \node at (0,1.75){ $\epsilon(\ell_0(G))$};
    \draw [-] (A) -- (B); 
    \draw [-] (B) -- (C);
    \draw [-] (C) -- (D);
    \draw [-] (D) -- (E);
    \draw [-] (E) -- (F);
    \draw [-] (A) -- (F);
    \end{tikzpicture}
    \caption{A demonstration of Theorem \ref{thm:color-contracted}.}

\end{figure}
}
\end{example}

In Section \ref{sec:families}, we show that the color-contracted graph is an useful tool for analyzing end~states. 

\section{The 3-cyclic forcing network}\label{section:cyclic}

One may view classical zero forcing   as the forcing network $(X,R)$ where  $X=\{1,2\}$ and $R=\{1\to 2\}.$ One natural extension of zero forcing   is to consider forcing networks $(X,R)$ with multiple colors such that for any two colors $i,j$ in $X$, there is a relation between them (meaning either $i \to j \in R$ or $j \to i \in R).$ As a first step, we consider such forcing networks with only 3 colors. 

For $X=\{1,2,3\}$, there are only two forcing networks up to isomorphism: $R_1=\{1\to 2, 2\to 3, 1\to 3\},$ and $R_2=\{1\to 2, 2\to 3, 3\to 1\}$. It is not hard to see that for the network $(X, R_1)$, if $G$ is an $X-$colored graph such that at least one the vertices is colored 1, then forcing  will terminate with all vertices colored 1 (since neither vertices colored 2 nor vertices colored 3 can force vertices colored 1). Otherwise, forcing  will terminate with all vertices colored 2 (if there is at least one vertex colored 2), or all vertices colored 3 (if there are no vertices colored 1 and no vertices colored 2 initially).

On the other hand, the behavior of the network $(X,R_2)$ is much more interesting and far less predictable, so we formally define this network and study it in this section. 
\begin{defn}
{\rm The {\em $3$-cyclic forcing network} is the network $(X,R)$ where $X=\{1,2,3\}$ and $R=\{1\rightarrow 2, 2\rightarrow 3, 3\rightarrow 1\}$.}
\end{defn}

 \begin{lemma}\label{lem:one color end state}
 Let $(X,R)$ be the 3-cyclic forcing network and let $G$ be an $X-$colored graph. Then $\epsilon(\ell_0(G))=\vec{i}$ for some $i\in \{1,2,3\}$ when forcing with propagation occurs.
\end{lemma}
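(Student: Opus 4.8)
The plan is to characterize the possible terminal states directly, rather than to track the dynamics of the forcing process step by step. First, since forcing with propagation is required, Theorem \ref{thm: terminates} guarantees that the process reaches an end state $\epsilon(\ell_0(G))$ after finitely many propagating forcing steps. It therefore suffices to show that this end state cannot contain an edge whose two endpoints have distinct colors; once that is established, the hypothesis that $G$ is connected forces every vertex to share a single color $i\in\{1,2,3\}$, which is exactly the assertion $\epsilon(\ell_0(G))=\vec{i}$.

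The key observation I would use is that a coloring is an end state precisely when none of the three rules of $R$ can change any vertex, i.e. no rule is applicable. I would then argue that every \emph{bichromatic} edge (one whose endpoints carry two distinct colors) admits an applicable rule. Indeed, the three rules $1\to2$, $2\to3$, $3\to1$ of the $3$-cyclic network together relate every unordered pair of distinct colors: an edge colored $\{1,2\}$ is acted on by $1\to2$, an edge colored $\{2,3\}$ by $2\to3$, and an edge colored $\{1,3\}$ by $3\to1$. Hence if the end state contained any bichromatic edge, one of the rules would still force a change, contradicting that we are at a terminal coloring. Consequently every edge of the end state is monochromatic, and connectivity of $G$ spreads this common color across all vertices.

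The point requiring care—rather than a genuine obstacle—is the justification that ``the process has terminated'' is equivalent to ``no rule is applicable.'' I would make this precise by recalling that the process cycles through the rules of $R$ in order and halts only when a full pass produces no change; since applying a rule $a\to b$ to a bichromatic edge with colors $a$ and $b$ always recolors the $b$-endpoint to $a$, the existence of any such edge at the purported end state would produce a genuine change and contradict termination. I would also flag where each hypothesis enters: propagation (through Theorem \ref{thm: terminates}) is what guarantees that an end state exists at all, as Example \ref{ex: does not terminate} shows the process may cycle forever without it, while connectivity of $G$ is what upgrades ``all edges monochromatic'' to ``all vertices the same color.'' Finally, I would note that the structural characterization of terminal states uses nothing about the cyclic order of the rules beyond the fact that $R$ relates all three pairs of distinct colors, so the same argument applies verbatim to any three-color network having that property.
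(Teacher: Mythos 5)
Your proposal is correct and follows essentially the same route as the paper's own (much terser) proof: Theorem \ref{thm: terminates} guarantees termination under propagation, any bichromatic edge admits an applicable rule since $1\to2$, $2\to3$, $3\to1$ cover all pairs of distinct colors, and connectivity then forces a monochromatic end state. Your write-up merely makes explicit the details (the pair-coverage check, the role of connectivity, and the equivalence of ``terminated'' with ``no rule applicable'') that the paper leaves implicit.
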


\begin{proof}
 If two adjacent vertices are colored with different colors, then a force can be applied. By Theorem \ref{thm: terminates}, the forcing network terminates, so it must be the case that $\epsilon(\ell_0(G))=\vec{i}$ for some $i\in \{1,2,3\}$.
\end{proof}

When only two of the three colors are used in the initial coloring, the end state can be characterized completely. This is summarized and stated without proof in Lemma \ref{end states for two colors} as it can be easily verified.
\begin{lemma}\label{end states for two colors} 
Let $(X,R)$ be the 3-cyclic forcing network and let be $G$ be an $X-$colored graph. Then for any propagating forcing step $i$,
\begin{itemize}
    \item[(1)] If $\ell_i(G)$ consists only of colors 1 and 2, then $\epsilon(\ell_0(G))=\vec{1}$.
    \item[(2)] If $\ell_i(G)$ consists only of colors 2 and 3, then $\epsilon(\ell_0(G))=\vec{2}$.
     \item[(3)] If $\ell_i(G)$ consists only of colors 1 and 3, then $\epsilon(\ell_0(G))=\vec{3}$.
    
\end{itemize}
\end{lemma}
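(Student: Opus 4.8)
The plan is to exploit the rock--paper--scissors structure of $R=\{1\to2,2\to3,3\to1\}$, under which color $1$ overwrites $2$, color $2$ overwrites $3$, and color $3$ overwrites $1$. I would prove (1) in detail and then obtain (2) and (3) by the identical argument after cyclically relabeling the colors via $1\mapsto2\mapsto3\mapsto1$; this permutation sends the rule set $R$ to itself, and hence preserves all the relational data (which color overwrites which) that the argument below relies on. So it suffices to treat the case where $\ell_i(G)$ uses only colors $1$ and $2$ and to show $\epsilon(\ell_0(G))=\vec1$.

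The key structural fact is that a given color can be introduced onto a vertex only by the rule that overwrites with it. In particular, a vertex becomes colored $3$ only through an application of $3\to1$, which requires a vertex that is already colored $3$. I would first record the invariant: if no vertex is colored $3$ at step $i$, then no vertex is colored $3$ at any later step. This is a one-step induction, since at a stage with no $3$ present the rules $2\to3$ and $3\to1$ cannot fire (each needs an existing vertex colored $3$), while $1\to2$ creates only $1$'s. Granting the invariant, a vertex colored $1$ is thereafter never recolored, because the only rule that can change a $1$ is $3\to1$, and by the invariant no vertex colored $3$ is ever available to apply it.

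To finish, I note that statement (1) presupposes that color $1$ actually occurs in $\ell_i(G)$: the degenerate all-$2$ coloring admits no force and terminates at $\vec2$, so it must be excluded from the reading of ``consists only of colors $1$ and $2$.'' Fixing any vertex colored $1$ at step $i$, the persistence established above shows it is still colored $1$ in the end state, so the end state can be neither $\vec2$ nor $\vec3$. Since Lemma \ref{lem:one color end state} guarantees the end state is $\vec j$ for some single $j\in\{1,2,3\}$, we conclude $j=1$, i.e. $\epsilon(\ell_0(G))=\vec1$. The individual steps are routine; the only points demanding care are the precise reading of ``consists only of colors $1$ and $2$'' (it must include a vertex of the winning color $1$) and verifying that the cyclic relabeling genuinely reduces (2) and (3) to (1) rather than forcing separate case analysis. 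That bookkeeping, rather than any hard estimate, is the main obstacle.
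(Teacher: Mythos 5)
Your proposal is correct. Note, however, that the paper does not actually supply a proof of this lemma: it is ``stated without proof \ldots\ as it can be easily verified,'' so there is no argument of record to compare against. Your write-up is a complete and clean version of exactly the verification the authors had in mind. The two ingredients you isolate are the right ones: the monotonicity invariant (once color $3$ is absent, neither $2\to 3$ nor $3\to 1$ can ever fire---the first lacks a target, the second lacks a forcer---so $3$ never reappears), and the persistence of the winning color (a vertex colored $1$ can only be recolored by $3\to 1$, which the invariant rules out); combining these with Lemma \ref{lem:one color end state}, which forces the end state to be monochromatic, pins the end state to $\vec{1}$. Your reduction of (2) and (3) to (1) by the cyclic relabeling is also sound, and for the right reason: although $R$ is an \emph{ordered} set of rules and the relabeling cyclically shifts that order, your argument never uses the order of application, only the ``which color overwrites which'' data, which the relabeling preserves. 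Finally, your observation about the degenerate reading (an all-$2$ coloring terminates at $\vec{2}$, so ``consists only of colors $1$ and $2$'' must be read as requiring the winning color to actually appear) is a genuine edge case that the paper's statement glosses over, and flagging it strengthens rather than weakens the proof.
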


We note that the analysis of end states of the 3-cyclic forcing network for graphs whose initial coloring consists of each of the three colors is highly sensitive to small changes in the initial coloring, and are therefore more complicated. We demonstrate this sensitivity in Example \ref{analysis is sensitive}.

\begin{example}\label{analysis is sensitive}{\rm
    Letting $\ell_0(P_7)=2312321$, the forcing procedure is as follows:
    \begin{center}
        $2312321\xrightarrow{\text{PFS1}}2311311\xrightarrow{\text{PFS2}}2211311\xrightarrow{\text{PFS3}}2233333\xrightarrow{\text{PFS4}}2222222.$
    \end{center}
    However, by letting  $\ell_0(P_7)=2312322$, the forcing procedure becomes:
    \begin{center}
        $2312322\xrightarrow{\text{PFS1}}2311322\xrightarrow{\text{PFS2}}2211222\xrightarrow{\text{PFS3}}1111111.$
    \end{center}
    Although the initial colorings only differ by one color, the end states are different.
    }
\end{example}

We now partially characterize end states based on initial colorings when each of the three colors are used at least once. It follows from Theorem \ref{thm:color-contracted} and Lemma \ref{lem:one color end state} that if $G'$ is the color-contracted graph of $G$, then $\epsilon(\ell_0(G))=\epsilon(\ell_0(G'))$, so in the following lemma, we assume that $G$ is color-contracted.

\begin{lemma}\label{end states with three colors}
Let $(X,R)$ be the 3-cyclic network and let $G$ be an $X-$colored graph that is color-contracted such that each color appears at least once.
\begin{itemize}
\item[(1)] If each vertex colored 3 has a neighboring vertex $v$ colored 2 such that all neighbors of $v$ are colored 3, then $
\epsilon(\ell_0(G))=\vec{1}.$
\item[(2)] If each vertex colored 2 has a neighboring vertex $v$ colored 1, then $
\epsilon(\ell_0(G))=\vec{3}.$
\item[(3)] If each vertex colored 1 has a neighboring vertex $v$ colored 3 such that all neighbors of $v$ are colored 1 and case (2) does not hold, then $
\epsilon(\ell_0(G))=\vec{2}.$
\end{itemize}

\end{lemma}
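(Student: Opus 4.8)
The plan is to prove all three cases by the same mechanism: run the three rules in their prescribed order $1\to 2$, then $2\to 3$, then $3\to 1$ (each as a propagating forcing step), only far enough to reach a coloring that uses just two of the three colors, and then read off $\epsilon(\ell_0(G))$ from Lemma \ref{end states for two colors}. Two structural facts drive everything. First, since $G$ is color-contracted it has no monochromatic edge, so the opening $1\to 2$ step cannot propagate: a color-$2$ vertex $x$ that is forced to $1$ has all its neighbors colored $1$ or $3$, and recoloring $x$ to $1$ enables no further $1\to 2$ force. Hence that step simply recolors, in a single round, every color-$2$ vertex having a color-$1$ neighbor, and each resulting new $1$ is \emph{directly} adjacent to an original color-$1$ vertex. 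Second, I record the monotonicity facts that rule $1\to 2$ never recolors a $1$ or a $3$, rule $2\to 3$ never recolors a $1$, and rule $3\to 1$ never recolors a $2$; these let me follow the ``special'' vertices and the surviving vertices across steps.

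Case (2) needs only the first step. Because every color-$2$ vertex has a color-$1$ neighbor, the $1\to 2$ step turns every $2$ into a $1$, while the color-$3$ vertices (at least one exists) are untouched. The coloring now uses only $1$ and $3$ with a $3$ present, so Lemma \ref{end states for two colors}(3) gives $\epsilon(\ell_0(G))=\vec{3}$. Case (1) needs the first two steps. A special color-$2$ vertex $v$ (all neighbors colored $3$) has no color-$1$ neighbor and retains its all-$3$ neighborhood through the $1\to 2$ step, so it stays colored $2$, and the color-$3$ vertices are also unchanged. Thus at the start of the $2\to 3$ step every color-$3$ vertex is still adjacent to a special color-$2$ vertex, so that step forces every $3$ to $2$; an original color-$1$ vertex survives untouched, so the coloring now uses only $1$ and $2$ with a $1$ present, and Lemma \ref{end states for two colors}(1) gives $\epsilon(\ell_0(G))=\vec{1}$.

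Case (3) is the crux and uses all three steps; I must show both that no color-$1$ vertex survives the $3\to 1$ step and that a color-$2$ vertex persists (so the end state is $\vec{2}$, not $\vec{3}$). For the surviving $2$: since case (2) fails there is a color-$2$ vertex $w$ with no color-$1$ neighbor, and by color-contraction all of $w$'s neighbors are then colored $3$; so $w$ is untouched by the (non-propagating) $1\to 2$ step and, being a $2$, is never recolored by $2\to 3$ or $3\to 1$, hence stays colored $2$ throughout. For the elimination of the $1$'s: a special color-$3$ vertex $v$ (all neighbors colored $1$) keeps its all-$1$ neighborhood through the $1\to 2$ and $2\to 3$ steps, never acquires a color-$2$ neighbor, and so remains colored $3$ at the start of the $3\to 1$ step; original color-$1$ vertices are likewise unchanged by those two steps. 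By the no-propagation observation every new $1$ is adjacent to an original $1$, so every connected component of the color-$1$ subgraph present at the start of the $3\to 1$ step contains an original color-$1$ vertex, which by hypothesis is adjacent to a (surviving) special color-$3$ vertex. Running $3\to 1$ with propagation therefore recolors each such component entirely to $3$, leaving no color-$1$ vertex. The coloring now uses only $2$ and $3$ and contains $w$, so Lemma \ref{end states for two colors}(2) gives $\epsilon(\ell_0(G))=\vec{2}$.

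The main obstacle is case (3), for two reasons that both dissolve once color-contraction is exploited. First, the opening $1\to 2$ step manufactures new color-$1$ vertices that need not themselves be dominated by special color-$3$ vertices; the fix is that color-contraction forbids propagation there, so each new $1$ sits next to an original $1$ and is swept up by the same component in the final $3\to 1$ step. Second, one must exclude the competing end state $\vec{3}$, which is precisely what the hypothesis that case (2) fails provides, since in a color-contracted graph a color-$2$ vertex lacking a color-$1$ neighbor is automatically surrounded by $3$'s and hence never recolored by any of the three rules. In writing this up I would double-check the boundary situations where the two-color lemma is invoked (that the target color indeed has a surviving representative in each case, which the arguments above supply), but I expect no further difficulty.
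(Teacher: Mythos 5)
Your proof is correct and follows essentially the same route as the paper's: apply the rules in their cyclic order, track the ``special'' vertices whose neighborhoods prevent them from being recolored, show that after at most two or three propagating steps only two colors survive (with the needed color still represented), and finish by invoking Lemma \ref{end states for two colors}. The only substantive difference is in case (3), where the paper argues that every vertex forced to color 1 in the first step lies on a 1-colored path ending at an originally-1 vertex, whereas you observe that color-contraction forbids the $1\to 2$ step from propagating at all, so each new 1 is directly adjacent to an original 1 --- a slightly cleaner way to reach the same conclusion.
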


\begin{proof}
For the first statement, we show that after at most two propagating steps, there will be no vertices colored 3, and the claim follows from Lemma \ref{end states for two colors}. Let $u$ be a vertex that is colored 3 and pick $v$ from the neighborhood of $u$ such that $v$ is colored 2 and all neighbors of $v$ are colored 3. Since $v$ has no neighbors colored $1$ or $2$, then the color of $v$ will not change to $1$ during the first propagating step. During the second propagating step (or the first if no vertices colored 1 are able to force initially), $v$ will force $u$ to become 2, so no vertices colored 3 will remain, and the statement follows from Lemma \ref{end states for two colors}

For the second statement, note that there will be no vertices colored 2 after the first propagating step. The claims then follow from Lemma~\ref{end states for two colors}.

For the third statement, consider the step in which vertices colored 3 force for the first time. We show that any vertex that is colored 1 will be colored 3 during this propagating step: Any vertex colored 1 that was originally colored 1 is adjacent to a vertex colored 3, so it will be colored 3 in this step. For any vertex $w$ that was forced to become 1 during propagating step one, there must exist a path $(w,v_1,...,v_k,u)$ where each vertex on the path is colored 1 and vertex $u$ was originally colored 1. Thus, during this propagating step, $u$ will be colored 3, and by propagation, vertices $v_k,...,v_1,w $ will be colored 3. This shows that there will be no vertices colored 1 after this propagating step. By hypothesis, there is at least one vertex originally colored 2 that is still colored 2. It follows from Lemma~\ref{end states for two colors} that $\epsilon(\ell_0(G))=\vec{2}.$
\end{proof}

We continue the study of the $3$-cyclic network in the next section by specializing to certain families of graphs. 

\subsection{Results of the 3-cyclic network  on special families of graphs}\label{sec:families}
In this section, we provide results on ends states for complete graphs, complete bipartite graphs, and for some path graphs. Much work remains to be done on classifying all path graphs, as we only study those whose color-contracted graph has length at most 5. We demonstrate that even when a small number of vertices is considered, the analysis of the path can be very complicated.

\begin{lemma}\label{lemma:completegraph}
Let $(X,R)$ be the $3$-cyclic network and let the complete graph $K_n$ be colored such that each color appears at least once in $\ell_0(K_n)$. Then $\epsilon(\ell_0(K_n))=\vec{3}$.
\end{lemma}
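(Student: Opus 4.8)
The plan is to apply the 3-cyclic forcing network to $K_n$ where all three colors appear, and show the process must stabilize at $\vec{3}$. Since $G=K_n$ is complete, every vertex is adjacent to every other vertex, so a single propagating forcing step applying the rule $c \to c'$ will recolor \emph{every} vertex of the current color $c$ to $c'$ in one sweep (there are no isolated regions and no subtleties about which vertices get forced). This means the entire state of $K_n$ is captured simply by which colors are present, so I would track the forcing purely at the level of the set of colors appearing in $\ell_i(K_n)$.

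First I would establish the base observation that the color-contracted graph of $K_n$ is itself a complete graph $K_m$ where $m$ is the number of distinct colors present (at most $3$), since vertices sharing a color form each monochromatic block and distinct blocks are always adjacent in $K_n$. By Theorem~\ref{thm:color-contracted} it suffices to analyze the contracted graph, so I may assume every vertex has a distinct color; with all three colors present this is $K_3$ colored $1,2,3$. Then I would simply run the 3-cyclic rules in order on the set $\{1,2,3\}$ and track the multiset of colors. Because $K_n$ is complete, the rule $1\to 2$ converts all $1$'s to $2$'s, then $2\to 3$ converts all $2$'s (including the new ones) to $3$'s, then $3\to 1$ converts all $3$'s back to $1$'s. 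The key is to check that after running the first full cycle of rules, only colors $1$ and $3$ remain (not all three), at which point Lemma~\ref{end states for two colors}(3) immediately forces the end state to be $\vec{3}$.

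Concretely, I expect the trace to go: starting with $\{1,2,3\}$ present, applying $1\to 2$ gives colors $\{2,3\}$; applying $2\to 3$ gives $\{3\}$; but one must be careful about the order and about which colors are genuinely present at each substep, since a propagating forcing step for a single rule sweeps the whole graph. The cleanest route is to verify directly that after the color $3$ vertices first become available to force and the cycle of rules is applied, the graph reaches a state using only two colors, and then invoke Lemma~\ref{end states for two colors} to conclude $\vec{3}$. In particular, I would check that colors $1$ and $3$ are the pair that survives, yielding end state $\vec{3}$ by part~(3) of that lemma.

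The main obstacle I anticipate is bookkeeping the \emph{order} of rule application correctly: because $R=\{1\to2,2\to3,3\to1\}$ is applied in sequence and each rule propagates to completion across the complete graph before the next rule fires, I must be precise about whether a color that was just created by one rule is then consumed by the next rule within the same cycle. Getting this sequencing right is what determines which two colors remain, and hence which of the three cases of Lemma~\ref{end states for two colors} applies. Once the two-color reduction is pinned down, the conclusion $\epsilon(\ell_0(K_n))=\vec{3}$ is immediate, so essentially all the care goes into the finite case analysis on the complete graph (equivalently, on $K_3$ after color-contraction).
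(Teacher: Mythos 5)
Your overall strategy is sound and genuinely different from the paper's proof: the paper disposes of this lemma in one line by citing Lemma \ref{end states with three colors} (specifically the statement that if every vertex colored 2 has a neighbor colored 1 then the end state is $\vec{3}$, which holds trivially in $K_n$ once color 1 appears somewhere), whereas you run the forcing process directly on the complete graph and finish with the two-color Lemma \ref{end states for two colors}. That route works, and your structural observations are correct: in $K_n$ each color class is a clique, so the color-contracted graph is a complete graph on at most three vertices, and any rule, once applicable, eliminates an entire color class within a single propagating forcing step.

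The genuine problem is that your explicit trace uses the wrong direction for the forcing rules, and as a result your write-up contains two mutually inconsistent computations. In the paper's convention (see Example \ref{forcing with prop}), the rule $1\to 2$ means a vertex colored 1 forces a neighbor colored 2 to become colored \emph{1}; it converts 2's into 1's, not 1's into 2's, matching classical zero forcing where blue forces white to become blue. Your trace ``$\{1,2,3\}\to\{2,3\}\to\{3\}$'' reads the rules backwards; under that backwards reading, Lemma \ref{end states for two colors} would also have to be reversed (it asserts that a graph colored with only 1's and 2's terminates at $\vec{1}$, not $\vec{2}$), so the trace and the lemma you invoke cannot both be right. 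The correct computation is the one in your closing sentences: applying $1\to 2$ to $K_n$ turns every vertex colored 2 into color 1 (any vertex colored 1 is adjacent to all of them), so after the first propagating forcing step exactly the colors $\{1,3\}$ remain, and part (3) of Lemma \ref{end states for two colors} then gives $\epsilon(\ell_0(K_n))=\vec{3}$. Once the trace is fixed to match this, your proof is complete. (Incidentally, the paper's own proof cites the ``third statement'' of Lemma \ref{end states with three colors}, but statement (3) of that lemma concludes $\vec{2}$; the statement actually doing the work is (2), whose hypothesis is exactly what completeness guarantees.)
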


\begin{proof}
The proof follows directly from the third statement of Lemma \ref{end states with three colors}.
\end{proof}

\begin{lemma}\label{cbexception}
Let $(X,R)$ be the $3$-cyclic network and let $K_{m,n}$ be the complete bipartite graph with vertex set $V(K_{m,n})=A\cup B$, where $A$ and $B$ are such that no edge has both endpoints in the same subset. If $K_{m,n}$ is colored such that each color appears at least once in $\ell_0(K_{m,n})$ and either each vertex in $A$ is colored 3 or each vertex in $B$ is colored 3, then $\epsilon(\ell_0(K_{m,n}))=\vec{1}$. Otherwise, $\epsilon(\ell_0(K_{m,n}))=\vec{3}$.
\end{lemma}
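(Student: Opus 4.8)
The plan is to reduce everything to the two-color lemma (Lemma \ref{end states for two colors}) by first understanding how a single rule behaves on $K_{m,n}$ as one propagating forcing step. The crucial structural feature is that, since $K_{m,n}$ is complete bipartite, a vertex colored $c$ forces \emph{every} neighbor colored $d$ simultaneously, and all of a vertex's neighbors lie in the opposite part. Writing $C_A$ and $C_B$ for the sets of colors occurring in $A$ and in $B$, I would record the following dynamic: in a propagating step applying rule $c\to d$, a $d$-colored vertex in $A$ changes to $c$ exactly when $B$ contains, or through propagation comes to contain, a vertex colored $c$, and symmetrically for $B$. Chasing the propagation across the two parts, one checks that such a step is ``all or nothing'': it either recolors \emph{every} $d$-vertex to $c$, or it changes nothing at all, the latter occurring only when $c$ is absent from a part and the colors $c,d$ are effectively trapped on a single part.

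With this dichotomy I would split into the two cases of the statement. Suppose first that one part, say $A$, is entirely colored $3$ (the case of $B$ is identical by symmetry); since all three colors occur, $B$ must contain both $1$ and $2$. The first rule $1\to 2$ is inert here, because every $2$ lies in $B$, its neighbors all lie in $A$ and are colored $3$, and no vertex of $A$ can become colored $1$ under this rule, so no $2$ ever acquires a $1$-neighbor. The next rule $2\to 3$ does fire: each $3$ in $A$ has a $2$-neighbor in $B$ and becomes $2$, after which by propagation every remaining $3$ (necessarily in $B$) becomes $2$ as well. At that point only colors $1$ and $2$ remain, so Lemma \ref{end states for two colors}(1) gives $\epsilon(\ell_0(K_{m,n}))=\vec{1}$.

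For the complementary case, where neither $A$ nor $B$ is monochromatically $3$, I would show that the very first propagating step, applying $1\to 2$, eliminates all $2$'s at once; then only colors $1$ and $3$ survive and Lemma \ref{end states for two colors}(3) yields $\epsilon(\ell_0(K_{m,n}))=\vec{3}$. This elimination is the heart of the argument, and I would prove it by contradiction. Suppose a $2$-vertex $w$ survives the step, say $w\in A$. Since $w$ is adjacent to all of $B$ and is never forced, $B$ never contains a vertex colored $1$ at any moment of the step; in particular $1\notin C_B$. As color $1$ appears somewhere, we get $1\in C_A$. But a $1$ in $A$ is adjacent to all of $B$, so any $2$ in $B$ would immediately be forced to $1$, placing a $1$ in $B$ — a contradiction — unless $B$ contains no $2$ at all. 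Hence $C_B$ contains neither $1$ nor $2$, forcing $C_B=\{3\}$ and contradicting the case hypothesis; the symmetric argument handles $w\in B$.

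The step I expect to be the main obstacle is exactly this Case-2 claim that one propagating application of $1\to 2$ wipes out every $2$: it is where the complete-bipartite propagation (a $2$ on one side being rescued by $1$'s manufactured on the other side) must be combined with the hypothesis that neither part is entirely $3$ in order to exclude a surviving $2$. Once this is in place, both cases collapse cleanly onto the two-color lemma, and the only remaining bookkeeping — verifying that colors $1$ and $3$ (respectively $1$ and $2$) are both genuinely still present so that Lemma \ref{end states for two colors} applies — is routine, since the relevant forcing rules never destroy the surviving colors.
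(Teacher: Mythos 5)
Your proof is correct, but it takes a genuinely different route from the paper's. The paper proceeds by enumeration: it lists, up to the $A$--$B$ symmetry, the six possible pairs of color sets $\bigl(\ell_0(A),\ell_0(B)\bigr)$ in which all three colors appear, computes the coloring after the first propagating step in each case, and invokes Lemma \ref{end states with three colors}. You instead key the case split directly to the dichotomy in the statement: when one part is entirely colored $3$, you show the rule $1\to 2$ is inert and the rule $2\to 3$ then eliminates every $3$; when neither part is entirely $3$, you show by a monotonicity-plus-contradiction argument (under rule $1\to 2$ vertices only ever change \emph{to} color $1$, so a $2$ surviving the first propagating step would force $C_B=\{3\}$ or $C_A=\{3\}$) that the very first propagating step eliminates every $2$. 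In both cases you finish with the two-color Lemma \ref{end states for two colors}. Your approach buys a conceptual explanation of why ``one side monochromatically $3$'' is exactly the dividing line, avoids the enumeration (and with it the risk of a missed case), and its reduction is cleaner: the paper cites Lemma \ref{end states with three colors} even in cases where only two colors survive after the first step, where strictly speaking the two-color lemma is the one that applies, whereas your citations are exact. What the paper's enumeration buys in exchange is that every case is a one-line mechanical computation with the intermediate colorings $\ell_1(A),\ell_1(B)$ recorded explicitly. One small remark: your opening ``all-or-nothing'' claim about a single propagating step on $K_{m,n}$ is stated somewhat vaguely (the precise version is that if any force occurs under $c\to d$, then every $d$-colored vertex is recolored), but nothing is lost, since your two case arguments are self-contained and never rely on that general claim.
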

\begin{proof}
{Let $\ell_i(A)$ and $\ell_i(B)$ denote the colors appearing in the vertices of $A$ and $B$ after the  $i$-th propagating step of the 3-color cyclic change rule. Then, the following cases exhaust all of the possibilities for initial colorings of the vertices in $A$ and $B$ (up to symmetry):
\begin{enumerate}
\item If $\ell_0(A)=\{1\}$ and $\ell_0(B)=\{2,3\}$ or $\{1,2,3\}$, then $\ell_1(A)=\{1\}$ and $\ell_1(B)=\{1,3\}$. Applying Lemma \ref{end states with three colors} shows $\epsilon(\ell_0(K_{m,n}))=\vec{3}$.
\item  If $\ell_0(A)=\{2\}$ and $\ell_0(B)=\{1,3\}$ or $\{1,2,3\}$, then $\ell_1(A)=\{1\}$ and $\ell_1(B)=\{1,3\}$. Applying Lemma \ref{end states with three colors} shows $\epsilon(\ell_0(K_{m,n}))=\vec{3}$.
\item  If $\ell_0(A)=\{3\}$ and $\ell_0(B)=\{1,2\}$ or $\{1,2,3\}$, , then 
$\ell_1(A)=\{2\}$ and $\ell_1(B)=\{1,2\}$. Applying Lemma \ref{end states with three colors} shows $\epsilon(\ell_0(K_{m,n}))=\vec{1}$.
\item If $\ell_0(A)=\{1,2\}$ and $\ell_0(B)=\{1,3\}$, $\{2,3\}$, or $\{1,2,3\}$ then $\ell_1(A)=\{1\}$ and $\ell_1(B)=\{1,3\}$. Applying Lemma \ref{end states with three colors} shows $\epsilon(\ell_0(K_{m,n}))=\vec{3}$.
\item If $\ell_0(A)=\{1,3\}$ and $\ell_0(B)=\{2,3\}$ or $\{1,2,3\}$ then $\ell_1(A)=\{1,3\}=\ell_1(B)$. Applying Lemma \ref{end states with three colors} shows $\epsilon(\ell_0(K_{m,n}))=\vec{3}$.
\item If $\ell_0(A)=\{1,2,3\}$ and $\ell_0(B)=\{1,2,3\}$  or $\{2,3\},$ then $\ell_1(A)=\{1,3\}=\ell_1(B)$. Applying Lemma \ref{end states with three colors} shows $\epsilon(\ell_0(K_{m,n}))=\vec{3}$.\qedhere
\end{enumerate}}
\end{proof}

We now focus our attention on classifying the end states of some initial colorings of the path $P_n$ that contain all three numbers.

\begin{theorem}
Let $P_n$ be an $X-$colored graph, and let $P_n'$ be the color-contracted graph of $P_n$. Then $\epsilon(\ell_0(P_n))=\epsilon(\ell_0(P_n')).$
\end{theorem}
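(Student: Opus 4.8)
The plan is to derive this statement directly as a corollary of Theorem~\ref{thm:color-contracted} together with Lemma~\ref{lem:one color end state}, since these two results already contain all the needed machinery. The first thing I would clarify is the meaning of the asserted equality: $P_n$ and $P_n'$ generally have different numbers of vertices, so ``$\epsilon(\ell_0(P_n))=\epsilon(\ell_0(P_n'))$'' must be read as an equality of monochromatic end states, i.e. both terminate with all vertices colored the \emph{same} single color $\vec{i}$. This reading is justified because Lemma~\ref{lem:one color end state} guarantees that, under the $3$-cyclic network with propagation, every $X$-colored graph terminates in a monochromatic state.

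The core argument is then short. First I would apply Lemma~\ref{lem:one color end state} to the graph $P_n'$, concluding that $\epsilon(\ell_0(P_n'))=\vec{i}$ for some $i\in\{1,2,3\}$; here it is worth observing (though it is not strictly needed, since the lemma holds for arbitrary graphs) that the color-contracted graph of a path is again a path, because by maximality no two adjacent blocks $H_j,H_{j+1}$ can share a color. Next I would invoke Theorem~\ref{thm:color-contracted}, which states that $\epsilon(\ell_0(P_n))$ is obtained from $\epsilon(\ell_0(P_n'))$ by coloring each block $H_j$ with the end-state color of its corresponding contracted vertex $v_j$. Since every $v_j$ receives color $i$ in $\epsilon(\ell_0(P_n'))$, every block $H_j$, and hence every vertex of $P_n$, is colored $i$. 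Therefore $\epsilon(\ell_0(P_n))=\vec{i}=\epsilon(\ell_0(P_n'))$, as desired.

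I do not anticipate a genuine obstacle: the statement is a specialization of the already-proved Theorem~\ref{thm:color-contracted} to the path. The only point requiring care is the interpretive one noted above—ensuring the equality is understood as ``both end states are the single color $\vec{i}$'' rather than a literal identification of colorings on vertex sets of different cardinalities. Once that convention is fixed, the proof reduces to chaining the two cited results in the order described.
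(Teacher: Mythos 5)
Your proposal is correct and follows essentially the same route as the paper: the paper's proof likewise applies Lemma~\ref{lem:one color end state} to conclude $\epsilon(\ell_0(P_n'))=\vec{i}$ for some $i\in\{1,2,3\}$, and then uses the construction of Theorem~\ref{thm:color-contracted} to transfer that monochromatic end state back to $P_n$. Your additional remarks---that the equality must be read as agreement of the common color, and that the color-contracted graph of a path is again a path---are clarifications the paper leaves implicit, but the underlying argument is identical.
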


\begin{proof}
By Lemma \ref{lem:one color end state}, $\epsilon(\ell_0(P_n'))=i$ for some $\vec{i}\in \{1,2,3\}$, and when constructing $\epsilon(\ell_0(P_n))$ from $\epsilon(\ell_0(P_n'))$ using the technique of Theorem \ref{thm:color-contracted}, we obtain that $\epsilon(\ell_0(P_n))=\vec{i}.$
\end{proof}

\begin{theorem}\label{lemma:pathall2}
    Let $(X,R)$ be the 3-cyclic network, and suppose $P_n=(a_1a_2\cdots a_{n})$ is an $X-$colored path such that each color appears at least once. Let $P_n'=(b_1b_2\cdots b_{k})$ be the color-contracted graph of $P_n$ and suppose that $k\leq 5.$ 
    {\rm
    \begin{enumerate}
        \item[(1)] Suppose the sequence 323 appears in $\ell_0(P_n')$. If no other 3 appears, then $\epsilon(\ell_0(P_n'))=\vec{1}$; otherwise the coloring $\epsilon(\ell_0(P_n'))=\vec{2}$.
        \item[(2)] Suppose the sequence 131 appears in $\ell_0(P_n')$. If no other 1 appears, then $\epsilon(\ell_0(P_n'))=\vec{2}$; otherwise the coloring $\epsilon(\ell_0(P_n'))=\vec{3}$.
        \item[(3)] If each vertex colored 2 is adjacent to a vertex colored 1 in $\ell_0(P_n')$, then $\epsilon(\ell_0(P_n'))=\vec{3}$.
        \item[(4)] Suppose $\ell_0(P_n')$ begins with 231 or ends 132. If no additional 3 appears, then $\epsilon(\ell_0(P_n'))=\vec{1}.$ Otherwise, $\epsilon(\ell_0(P_n'))=\vec{2}$, or $P_n'$ has the coloring $23132$, in which case $\epsilon(\ell_0(P_n'))=\vec{1}$.
        \item[(5)] If neither (1)-(4) holds true, then $\epsilon(\ell_0(P_n'))=\vec{1}$, or $\ell_0(P_n')=23213$ or $31232$, in which case, $\epsilon(\ell_0(P_n'))=\vec{2}$.
     \end{enumerate}}
\end{theorem}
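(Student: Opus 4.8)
The plan is to leverage the color-contracted hypothesis together with Lemmas~\ref{end states for two colors} and~\ref{end states with three colors}, reducing the entire theorem to a bounded case analysis on strings of length $k \le 5$ over the alphabet $\{1,2,3\}$. Since $P_n'$ is color-contracted, no two consecutive letters $b_ib_{i+1}$ are equal, and since each color appears at least once, the string uses all of $1,2,3$. For $k \le 5$ this leaves only finitely many admissible strings, and the five cases of the theorem are designed to partition them (up to the tail/head asymmetry) according to which structural pattern triggers a specific end state. First I would carry out one or two propagating forcing steps symbolically on the relevant local patterns, observing that after each PFS the string remains color-contracted (or can be re-contracted by Theorem~\ref{thm:color-contracted} without changing the end state), so that the analysis stays within short strings throughout.

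The core of each individual case is to reduce to a two-color configuration and then invoke Lemma~\ref{end states for two colors}. For statement~(3), each vertex colored $2$ being adjacent to a $1$ means that after one PFS every $2$ becomes a $1$ (as $1 \to 2$ fires), leaving only colors $1$ and $3$, whence $\epsilon = \vec 3$ by part~(3) of Lemma~\ref{end states for two colors}; this is essentially a restatement of Lemma~\ref{end states with three colors}(2) restricted to paths, so I would dispatch it first. For statement~(1), I would track the fate of the substring $323$: the central $2$ has both neighbors colored $3$, so it cannot advance to $1$ during the first PFS and instead forces its neighbors, spreading $2$'s; if no other $3$ exists the configuration collapses to colors $1,2$ giving $\vec 1$, while an additional $3$ elsewhere forces a $\{2,3\}$-only state giving $\vec 2$. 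Statement~(2) is the cyclic analogue with $131$ and is handled by the same argument shifted by one color. Statement~(4) concerns the boundary patterns $231$ and $132$ at the ends of the path, where the path endpoint restricts which forces are available, and this is where the exceptional short coloring $23132$ must be isolated by direct computation.

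The main obstacle I anticipate is statement~(5), the catch-all: it asserts that every admissible length-$\le 5$ string not covered by (1)--(4) ends at $\vec 1$, \emph{except} the two explicit strings $23213$ and $31232$ which end at $\vec 2$. Here I cannot rely on a clean structural lemma, so the plan is to enumerate exhaustively. I would first argue that it suffices to consider exactly $k=5$ (strings of length $3$ and $4$ that escape (1)--(4) being quickly checked, and the all-three-colors requirement ruling out $k<3$), then list all color-contracted length-$5$ strings using all three colors, discard those matching the hypotheses of (1)--(4), and compute the end state of each survivor by running the $3$-cyclic forcing process. The delicate point is verifying that exactly $23213$ and $31232$ deviate to $\vec 2$ and confirming these are genuinely excluded from (1)--(4) (e.g.\ that $23213$ contains no $323$ or $131$, has a $2$ not adjacent to any $1$, and does not begin $231$ nor end $132$), so that the five cases are both exhaustive and consistent. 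I would organize this enumeration in a table, grouping strings by their starting letter and exploiting the $1\to2\to3\to1$ rotational symmetry of the rule to halve the casework wherever the hypotheses of (1)--(4) are themselves symmetric.
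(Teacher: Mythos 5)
Your overall architecture --- reduce to a finite list of color-contracted strings of length at most $5$, dispatch the structural cases via Lemma~\ref{end states with three colors} and Lemma~\ref{end states for two colors}, and settle the exceptional strings by running the process directly --- is exactly the paper's. But there is a genuine gap in the two places where you invoke the ``$1\to2\to3\to1$ rotational symmetry'' of the rule: this symmetry does not exist. The set $R$ is \emph{ordered}, and the process always tries $1\to 2$ first, then $2\to 3$, then $3\to 1$, cyclically (this is exactly how the computations in Example~\ref{analysis is sensitive} run). Permuting the colors cyclically turns this into a process whose first rule is $2\to 3$, i.e.\ a different process. Concretely: the coloring $3231$ satisfies case (1) and terminates at $\vec{1}$ (no rule fires, then $2\to3$ gives $2221$, then $1\to2$ gives $1111$); its rotation $1312$ satisfies case (2), but terminates at $\vec{3}$ (the $1\to2$ step gives $1311$, then the $3\to1$ step swallows everything), not at the rotated answer $\vec{2}$. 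So case (2) cannot be handled as ``case (1) shifted by one color,'' and halving the case-(5) enumeration by rotation is likewise invalid; reversal of the path is the only legitimate symmetry, and it is the only one the paper uses. Note that this is also the spot where the printed statement is itself inconsistent: $1312$ satisfies the hypotheses of both (2) and (3), which assert different end states, and direct computation gives $\vec 3$. A rotation argument silently reproduces this defect, whereas the honest string-by-string computation you propose everywhere else would have exposed it.

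The same root problem --- that an intermediate coloring does not determine the future, because the position in the rule cycle is part of the state --- undermines your claim that after a PFS you may ``re-contract by Theorem~\ref{thm:color-contracted} without changing the end state.'' That theorem is about initial colorings; re-contracting mid-process and restarting the rule order can change the outcome. For example, from $32313$ the first rule to fire is $2\to3$, giving $22213$, and continuing with the next rule $3\to1$ yields $22233$ and then $\vec{2}$; but re-contracting $22213$ to $213$ and restarting with $1\to2$ yields $113$ and then $\vec{3}$. Fortunately you never need re-contraction (the strings have length at most $5$ and never grow), so the repair is simply to run every string from its initial coloring through the full ordered cycle $1\to2$, $2\to3$, $3\to1$, as the paper does, and to apply only Lemma~\ref{end states for two colors} to intermediate states --- that lemma is safe because once only two colors survive, only one rule can ever fire, so the cycle position becomes irrelevant.
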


\begin{proof}
We proceed via a case-by case analysis.
\begin{enumerate}
\item [(1)] If the sequence 323 appears and there is no additional 3, then by (1) of  Lemma \ref{end states with three colors}, $\epsilon(\ell_0(P_n'))=\vec{1}$. Otherwise $P_n'$ must have the coloring 32313 or 31323, and in each case $\epsilon(\ell_0(P_n'))=\vec{2}.$

\item [(2)] If the sequence 131 appears and there is no additional 1, then by (2) of  Lemma \ref{end states with three colors}, $\epsilon(\ell_0(P_n'))=\vec{2}$. Otherwise $P_n'$ must have the coloring 13121 or 12131, and in each case $\epsilon(\ell_0(P_n'))=\vec{3}.$

\item [(3)] If each vertex colored 2 is adjacent to a vertex colored 1, by (3) of Lemma \ref{end states with three colors}, $\epsilon(\ell_0(P_n'))=\vec{3}.$

\item [(4)] Suppose the coloring begins with 231 or ends 132. If no additional 3 appears, then after two propagating time steps, each vertex will be colored 1 or 2, so  $\epsilon(\ell_0(P_n'))=\vec{1}.$ If there does exist an additional vertex colored 3 and the coloring is not 23132, then the coloring is either 2313, 23131, 32123, 23123, or the reverse of one of these colorings, and it can be quickly verified that $\epsilon(\ell_0(P_n'))=\vec{2}.$

\item [(5)] Suppose now that neither (1), (3), nor (4) holds true. Since (3) fails, then there must exist a vertex colored 2 that is adjacent to no vertex colored 1, and since (1) fails, then this vertex must be an end point. It follows that the coloring either starts in 23 or ends in 32. Since (4) fails, then the coloring either starts with 232 or ends with 232 (which shows that (2) fails).  Since 1 must appear in the coloring and (1) fails, then the coloring either starts with 2321 or ends with 1232. If the coloring is 23212 or 21232, it is easy to verify that $\epsilon(P_n'))=\vec{1}.$ Otherwise, the coloring is 2321, 23213, or the reverse of one of these, and in either case, we see that $\epsilon(\ell_0(P_n')=\vec{1}.$\qedhere
\end{enumerate}
\end{proof}

If the color-contracted graph has more than five vertices, then there are a large number of cases, similar to the ones enumerated in Theorem~\ref{lemma:pathall2}, that need to be considered. Until one can find more general sufficient conditions for obtaining a particular end state, it remains difficult to find a statement analogous to Theorem~\ref{lemma:pathall2} in the case of larger color-contracted graphs.

\section{Future work}\label{sec:future}

In classical zero forcing, it is often of interest to determine how many forcing steps are required to terminate the process, given an initial coloring; this is called {\em propagation time}. It is known that a graph on $n$ vertices has propagation time at most $n-1$, with the path $P_n$ being the only graph to achieve the upper bound \cite{AIMMINIMUMRANKSPECIALGRAPHSWORKGROUP20081628}.  We show in Theorem \ref{thm: terminates} that when a forcing network is applied to a graph on $n$ vertices, the process will terminate in at most $n-1$ propagating time steps. This gives rise to the following question:
\begin{question} Given a network $(X,R)$, is it possible to classify the $X-$colored graphs on $n$ vertices for which the network terminates in exactly $n-1$ propagating time step?
\end{question}
{One could also study the effect elementary graph operations have on the propagation time.}
\begin{question} How do different graph operations, {such as addition or deletion of a vertex or of an edge, merging and splitting of vertices, and edge contraction,} affect the end state and total number of forcing steps {in the multi-color forcing procedure?}
\end{question}

Lastly, it would also be of interest to characterize graphs for which a multi-color forcing procedure without propagation actually terminates. We state this precisely below. 

\begin{question}
Given a forcing network $(X,R)$ whose steps do not allow propagation, can one give sufficient and necessary conditions on an $X-$colored graph, $G$, such that the multi-color forcing procedure will terminate?
\end{question}
\section*{Acknowledgments}
The authors thank Minerva Catral for introducing them to this problem.

\begin{bibdiv}
\begin{biblist}

\bib{AIMMINIMUMRANKSPECIALGRAPHSWORKGROUP20081628}{article}{
   author={AIM Minimum Rank-Special Graphs Work Group},
   title={Zero forcing sets and the minimum rank of graphs},
   journal={Linear Algebra Appl.},
   volume={428},
   date={2008},
   number={7},
   pages={1628--1648},
   issn={0024-3795},
   review={\MR{2388646}},
   doi={10.1016/j.laa.2007.10.009},
}

\bib{BARIOLI2010401}{article}{
   author={Barioli, Francesco},
   author={Barrett, Wayne},
   author={Fallat, Shaun M.},
   author={Hall, H. Tracy},
   author={Hogben, Leslie},
   author={Shader, Bryan},
   author={van den Driessche, P.},
   author={van der Holst, Hein},
   title={Zero forcing parameters and minimum rank problems},
   journal={Linear Algebra Appl.},
   volume={433},
   date={2010},
   number={2},
   pages={401--411},
   issn={0024-3795},
   review={\MR{2645093}},
   doi={10.1016/j.laa.2010.03.008},
}

\bib{JGT:JGT21637}{article}{
   author={Barioli, Francesco},
   author={Barrett, Wayne},
   author={Fallat, Shaun M.},
   author={Hall, H. Tracy},
   author={Hogben, Leslie},
   author={Shader, Bryan},
   author={van den Driessche, P.},
   author={van der Holst, Hein},
   title={Parameters related to tree-width, zero forcing, and maximum
   nullity of a graph},
   journal={J. Graph Theory},
   volume={72},
   date={2013},
   number={2},
   pages={146--177},
   issn={0364-9024},
   review={\MR{3010007}},
   doi={10.1002/jgt.21637},
}

\bib{PhysRevLett.99.100501}{article}{
  title = {Full Control by Locally Induced Relaxation},
  author = {Burgarth, Daniel},
  author = {Giovannetti, Vittorio},
  journal = {Phys. Rev. Lett.},
  volume = {99},
  issue = {10},
  pages = {100501},
  numpages = {4},
  year = {2007},
  month = {9},
  publisher = {American Physical Society},
  doi = {10.1103/PhysRevLett.99.100501},
  url = {https://link.aps.org/doi/10.1103/PhysRevLett.99.100501}
}

\bib{Chilakamarri}{article}{
   author={Chilakamarri, Kiran B.},
   author={Dean, Nathaniel},
   author={Kang, Cong X.},
   author={Yi, Eunjeong},
   title={Iteration index of a zero forcing set in a graph},
   journal={Bull. Inst. Combin. Appl.},
   volume={64},
   date={2012},
   pages={57--72},
   issn={1183-1278},
   review={\MR{2919232}},
}

\bib{DREYER20091615}{article}{
   author={Dreyer, Paul A., Jr.},
   author={Roberts, Fred S.},
   title={Irreversible $k$-threshold processes: graph-theoretical threshold
   models of the spread of disease and of opinion},
   journal={Discrete Appl. Math.},
   volume={157},
   date={2009},
   number={7},
   pages={1615--1627},
   issn={0166-218X},
   review={\MR{2510242}},
   doi={10.1016/j.dam.2008.09.012},
}

\bib{edholm}{article}{
   author={Edholm, Christina J.},
   author={Hogben, Leslie},
   author={Huynh, My},
   author={LaGrange, Joshua},
   author={Row, Darren D.},
   title={Vertex and edge spread of zero forcing number, maximum nullity,
   and minimum rank of a graph},
   journal={Linear Algebra Appl.},
   volume={436},
   date={2012},
   number={12},
   pages={4352--4372},
   issn={0024-3795},
   review={\MR{2917414}},
   doi={10.1016/j.laa.2010.10.015},
}

\bib{Hogben}{article}{
   author={Hogben, Leslie},
   author={Huynh, My},
   author={Kingsley, Nicole},
   author={Meyer, Sarah},
   author={Walker, Shanise},
   author={Young, Michael},
   title={Propagation time for zero forcing on a graph},
   journal={Discrete Appl. Math.},
   volume={160},
   date={2012},
   number={13-14},
   pages={1994--2005},
   issn={0166-218X},
   review={\MR{2927529}},
   doi={10.1016/j.dam.2012.04.003},
}
\bib{HUANG20102961}{article}{
   author={Huang, Liang-Hao},
   author={Chang, Gerard J.},
   author={Yeh, Hong-Gwa},
   title={On minimum rank and zero forcing sets of a graph},
   journal={Linear Algebra Appl.},
   volume={432},
   date={2010},
   number={11},
   pages={2961--2973},
   issn={0024-3795},
   review={\MR{2639259}},
   doi={10.1016/j.laa.2010.01.001},
}

\bib{Kalinowski}{article}{
   author={Kalinowski, Thomas},
   author={Kam\v{c}ev, Nina},
   author={Sudakov, Benny},
   title={The zero forcing number of graphs},
   journal={SIAM J. Discrete Math.},
   volume={33},
   date={2019},
   number={1},
   pages={95--115},
   issn={0895-4801},
   review={\MR{3899157}},
   doi={10.1137/17M1133051},
}

\bib{AIMOpen}{misc}{
    title={Zero forcing and its applications},
    organization={American Institute of Mathematics},
    url={http://aimpl.org/zeroforcing/1/}
}

\end{biblist}
\end{bibdiv}
\end{document}